\numberwithin{equation}{section}
\theoremstyle{plain}
\newtheorem{theorem}{Theorem}[section]
\newtheorem{proposition}[theorem]{Proposition}
\newtheorem{lemma}[theorem]{Lemma}
\theoremstyle{definition}
\newtheorem{situation}[theorem]{Situation}
\newtheorem{remark}[theorem]{Remark}
\newtheorem{observation}[theorem]{Observation}
\renewcommand{\mathbb}{\mathbf}
\renewcommand{\setminus}{\mathbin{\rule[0.2em]{0.67em}{0.12em}}}%
\newcommand{\et}{\mathrm{\acute et}}
\DeclareMathOperator{\Aut}{\mathrm{Aut}}
\DeclareMathOperator{\Ker}{\mathrm{Ker}}
\DeclareMathOperator{\Sym}{\mathrm{Sym}}
\title[Generic triviality]{Generic triviality of automorphism groups of complete intersections}
\author{Renjie Lyu}
\address{School of Mathematical Sciences, Xiamen University, Xiamen 361005, China.}
\email{r.lyu@xmu.edu.cn}
\author{Dingxin Zhang}
\address{YMSC, Tsinghua University, Beijing 100086, China.}
\email{dingxin@tsinghua.edu.cn}
\keywords{Automorphism, Complete Intersection, Monodromy group}
\subjclass{14J50, 14M10, 14D05, 14J70}
\begin{document}

\begin{abstract}
%With certain conditions on cohomological actions and monodromy groups, 
%we show that the automorphism group of a general object in a family 
%of smooth projective varieties is sufficiently small. 
%We prove in most cases that a general smooth complete intersection 
%in the projective space has no non-trivial automorphisms. 
In this paper, we study the automorphism groups of polarized
algebraic varieties via cohomology and monodromy representations. 
As an application, we prove that a general smooth complete intersection 
for a number of types admits no non-trivial automorphisms. 
This result improves the work of Chen-Pan-Zhang in positive characteristic.
\end{abstract}

\maketitle

\tableofcontents

\section*{Introduction}
Let \(k\) be an algebraically closed field,
and let \(X \subset \mathbb{P}^{n}_{k}\) be a smooth closed subvariety.
A \emph{linear automorphism} is an automorphism of \(\mathbb{P}^{n}\)
that preserves \(X\). We denote by \(\mathrm{Aut}_{L}(X)\) the group of 
linear automorphisms of \(X\).

This paper aims to show that a general smooth complete intersection
in projective space admits no non-trivial linear automorphisms, with some 
exceptional cases. For hypersurfaces,
Matsumura and Monsky~\cite{Matsumura-Monsky:hypersurfaces-auto} 
proved that when \(n \geq 3\) and \(d \geq 3\), the group of linear
automorphisms for a generic degree \(d\) hypersurface in
\(\mathbb{P}^{n}\) is trivial. The cases for \(n = 2, d \geq 3\) 
is treated in~\cite[\S~10]{Katz-Sarnak:monodromy}
and~\cite{Poonen:auto-hypersurface}. 
In~\cite{Benoist:separated-moduli-ci}, Benoist showed that the 
linear automorphism group of any smooth complete intersection is finite, 
except for the case of hyperquadrics. Based on this result, 
Chen et al. proved that when \(\mathrm{char}(k)=0\), a general smooth 
complete intersection in \(\mathbb{P}^n_k\) of multidegree 
\((d_1,\ldots, d_c)\neq (2), (2,2)\) has no non-trivial linear automorphism. 
In positive characteristic \(p\), they asserted that the 
order of any non-trivial linear automorphism, if it exists, 
must be divided by \(p\), see~\cite[Thm. 1.3]{Chen-Pan-Zhang:auto-coh-ci}.
Javanpeykar and Loughran obtained the same conclusion for complex 
complete intersections of type \((2, 2, 2)\) via a different approach.

%\begin{theorem}[{\cite[Thm. 1.3]{Chen-Pan-Zhang:auto-coh-ci}}]%
%\label{theorem:cpz}
%Let \((d_{1},\ldots,d_{c};n)\) be a sequence of natural numbers, with
%\(d_{i} \geq 2\) and \(n - c \geq 2\).  Let \(X\) be a general smooth 
%complete intersection in \(\mathbb{P}^{n}_k\) of multidegree
%\((d_{1},\ldots,d_{c})\neq (2,2)\).
%\begin{enumerate}
%\item if the characteristic of \(k\) is \(0\), then \(\mathrm{Aut}_{L}(X) = \{1\}\).
%\item if the characteristic of \(k\) is \(p > 0\), then any non-trivial 
%linear automorphism of \(X\), if it exists, has order \(p^r\) for some 
%\(r\in \mathbb{N}\).
%\end{enumerate}
%\end{theorem}

In this exposition, we introduce a general method for studying automorphisms
of polarized varieties in a. Applying this method to complete 
intersections, we obtain the following main result:

\begin{theorem}[= Theorem~\ref{theorem:gen-trivial-auto}]%
\label{theorem:main}
Let \(k\) be an algebraically closed field of characteristic
\(p \neq 2\). Suppose \((d_{1},\ldots,d_{c})\) is a sequence
of positive integers satisfying one of the following
\begin{itemize}
\item \(3 \leq d_{1} \leq d_{2} \leq \cdots \leq d_{c}\) and \(p \nmid d_{1}\);
\item \(c \geq 3\) and \(2 = d_{1} = d_{2} = d_{3} \leq d_4\leq \cdots \leq d_{c}\).
\end{itemize}
Then, for a general smooth complete intersection \(X\subset \mathbb{P}^{n}_{k}\) 
of multidegree \((d_{1},\ldots,d_{c})\) with \(c < n\), we have
\(\Aut_L(X)=\{id\}\), unless \(X\) is a cubic curve.
\end{theorem}
In positive characteristic, this result confirms the triviality of 
linear automorphisms for a general complete intersection of many types, 
which partially strengthens the result in
~\cite{Chen-Pan-Zhang:auto-coh-ci}. Other than complete 
intersections in projective space, we also study the case 
of a sufficiently ample hypersurface in a smooth projective variety.

\begin{theorem}[= Theorem~\ref{theorem:general-type-hypersurface}]%
\label{theorem:trivial}
    Let \(P\) be a smooth complex projective variety. Let \(\omega_P\)
    be the canonical sheaf on \(P\), and let \(\mathscr{L}\) be
    a very ample invertible sheaf such that \(\mathscr{L} \otimes \omega_P\)
    remains very ample. Then a general smooth section of \(\mathscr{L}\)
    has no non-trivial automorphisms.
\end{theorem}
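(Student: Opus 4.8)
The plan is to realize Theorem~\ref{theorem:trivial} as an instance of the criterion Theorem~\ref{theorem:intro-criterion}. Let $B\subseteq|\mathscr{L}|=\mathbb{P}\,\mathrm{H}^{0}(P,\mathscr{L})$ be the open locus parametrizing smooth members, and let $\pi\colon\mathcal{X}\to B$ be the universal family, so that $m:=\dim X_{b}=\dim P-1$. The starting observation is adjunction: $\omega_{X_{b}}\cong(\omega_{P}\otimes\mathscr{L})|_{X_{b}}$, which is very ample because $\omega_{P}\otimes\mathscr{L}$ is, so every smooth section $X_{b}$ is canonically polarized. I take the polarization in Theorem~\ref{theorem:intro-criterion} to be the relative dualizing sheaf $\mathcal{L}:=\omega_{\mathcal{X}/B}$, which is relatively very ample; with this choice $\Aut_{B}(\mathcal{X},\omega_{\mathcal{X}/B})=\Aut_{B}(\mathcal{X})$, since every automorphism preserves the canonical class, and correspondingly $\mathrm{Aut}_{L}(X_{b})=\mathrm{Aut}(X_{b})$. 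It therefore suffices to verify the three hypotheses of Theorem~\ref{theorem:intro-criterion} and then to exclude the value $\mathbb{Z}/2\mathbb{Z}$.

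For the first hypothesis, ampleness of $\omega_{X_{b}}$ forces $\mathrm{Aut}(X_{b})$ to be finite, hence $\Aut^{0}(X_{b})=0$ and $\mathrm{H}^{0}(X_{b},T_{X_{b}})=\mathrm{Lie}\,\Aut^{0}(X_{b})=0$; the vanishing of infinitesimal automorphisms makes $\Aut_{B}(\mathcal{X})\to B$ unramified. Properness, hence finiteness, follows from the separatedness of the moduli of canonically polarized varieties à la Matsusaka–Mumford~\cite{Matsusaka-Mumford:two-fundamental-theorems}. The second hypothesis is where the very ample canonical bundle is decisive, and here the argument is cleaner than for complete intersections. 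The canonical embedding $X_{b}\hookrightarrow\mathbb{P}(\mathrm{H}^{0}(X_{b},\omega_{X_{b}})^{\vee})$ is $\mathrm{Aut}(X_{b})$-equivariant, so an automorphism acting by a scalar on $\mathrm{H}^{0}(X_{b},\omega_{X_{b}})$ acts trivially on the ambient projective space and is therefore the identity; in other words $\mathrm{Aut}(X_{b})\hookrightarrow\mathrm{PGL}\big(\mathrm{H}^{0}(X_{b},\omega_{X_{b}})\big)$. Since $\mathrm{H}^{0}(X_{b},\omega_{X_{b}})=\mathrm{H}^{m,0}(X_{b})$ is a direct summand of $\mathrm{H}^{m}(X_{b},\mathbb{C})\cong\mathrm{H}^{m}_{\et}(X_{b},\mathbb{Q}_{\ell})\otimes\mathbb{C}$, any automorphism acting trivially on $\mathrm{H}^{m}$ acts trivially on $\mathrm{H}^{m,0}$ and hence is the identity. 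So the cohomological action is faithful — not merely for one member but for every one.

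The third hypothesis is supplied by the Lefschetz-pencil machinery exactly as in Section~\ref{sec:auto-ci}. Very ampleness of $\mathscr{L}$ produces Lefschetz pencils in $|\mathscr{L}|$; the smoothness and irreducibility of $P$ make the dual variety irreducible, so all vanishing cycles are conjugate under the monodromy, and Deligne's analysis of vanishing cohomology~\cite[\S4-5]{Deligne:Weil-I} gives the bigness of the geometric monodromy on $(R^{m}\pi_{\ast}\mathbb{Q}_{\ell})_{\mathrm{prim}}$ required by Theorem~\ref{theorem:intro-criterion}. (The vanishing cohomology is nonzero here: otherwise $\mathrm{H}^{m,0}(X_{b})$ would be the image of the finite-dimensional $\mathrm{H}^{0}(P,\Omega^{m}_{P})$, contradicting that $\dim\mathrm{H}^{0}(X_{b},\omega_{X_{b}})$ grows with the positivity of $\mathscr{L}$.) With the three hypotheses in hand, Theorem~\ref{theorem:intro-criterion} yields $\mathrm{Aut}(X_{b})\in\{1,\mathbb{Z}/2\mathbb{Z}\}$ for general $b$.

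The remaining, and genuinely hardest, step is to rule out $\mathrm{Aut}(X_{b})\simeq\mathbb{Z}/2\mathbb{Z}$. If $\mathrm{Aut}(X_{b})=\{1,\sigma\}$, then $\sigma$ lies in the centralizer of the big monodromy and hence acts as $-\mathrm{id}$ on the vanishing cohomology, in particular on $\mathrm{H}^{m,0}_{\mathrm{van}}(X_{b})$. Were $\sigma^{\ast}$ the scalar $-1$ on all of $\mathrm{H}^{0}(X_{b},\omega_{X_{b}})$, the $\mathrm{PGL}$-faithfulness above would force $\sigma=\mathrm{id}$, a contradiction; thus the only escape is that $\sigma^{\ast}$ has a $(+1)$-eigenvector inside the invariant part $\mathrm{H}^{m,0}_{\mathrm{inv}}(X_{b})=\mathrm{im}\big(\mathrm{H}^{0}(P,\Omega^{m}_{P})\to\mathrm{H}^{0}(X_{b},\omega_{X_{b}})\big)$, on which the monodromy argument is silent. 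This is the main obstacle. When $\mathrm{h}^{m,0}(P)=0$ (for instance $P=\mathbb{P}^{n}$, recovering the hypersurface case) the obstruction is vacuous and the canonical-embedding argument alone finishes the proof. In general I would close the gap by a direct argument available in characteristic zero: a universal nontrivial involution would, after an étale double cover of $B$, present the general $X_{b}$ as a double cover of $X_{b}/\sigma$ branched along its smooth fixed locus, and either a dimension count of such double covers against $\dim B=\mathrm{h}^{0}(P,\mathscr{L})-1$ or a topological Lefschetz fixed-point computation — exploiting that $\sigma$ acts by $-1$ on the large space $\mathrm{H}^{m}_{\mathrm{van}}(X_{b})$ — produces the contradiction, after which $\mathrm{Aut}(X_{b})=\{1\}$ for general $b$, as claimed.
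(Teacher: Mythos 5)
Your proposal coincides with the paper's proof all the way through the verification of the three hypotheses of Theorem~\ref{theorem:mere}: hypothesis (1) via adjunction, finiteness of the automorphism group of a canonically polarized manifold, Matsusaka--Mumford for properness, and smoothness of group schemes in characteristic zero for unramifiedness; hypothesis (2) via the canonical-embedding argument, which is word for word the paper's Lemma~\ref{lemma:faithful-general-type}; hypothesis (3) via Lefschetz pencils and Deligne's theorems, with the finite-monodromy alternative excluded because the \((m,0)\)-part of the vanishing cohomology is nonzero. Up to there everything you write is correct, and your justification of that last nonvanishing (comparing \(\mathrm{H}^0(X_b,\omega_{X_b})\) with the bounded space \(\operatorname{im}\bigl(\mathrm{H}^0(P,\Omega^m_P)\to \mathrm{H}^0(X_b,\omega_{X_b})\bigr)\)) is, if anything, more careful than the paper's, since what is really needed is \(\mathrm{H}^{m,0}_{\mathrm{van}}\neq 0\) rather than \(\mathrm{H}^{m,0}(X_b)\neq 0\); this follows from \(h^0(\omega_P\otimes\mathscr{L})>h^0(\omega_P)\), Kodaira vanishing, and the restriction sequence \(0\to\omega_P\to\omega_P\otimes\mathscr{L}\to\omega_{X_b}\to 0\).

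The genuine gap is the final step, and as submitted your proof establishes the theorem only when \(\mathrm{H}^0(P,\Omega^m_P)=0\). You correctly isolate the difficulty: the criterion forces the putative involution \(\sigma\) to act as \(\pm1\) only on the vanishing cohomology, so the canonical-embedding trick kills \(\sigma\) only if \(\sigma^{*}\) is a scalar on all of \(\mathrm{H}^0(X_b,\omega_{X_b})\), and a \((+1)\)-eigenvector inside \(\operatorname{im}\bigl(\mathrm{H}^0(P,\Omega^m_P)\to\mathrm{H}^0(X_b,\omega_{X_b})\bigr)\) is invisible to monodromy. But you then only gesture at two fixes, and neither is a proof. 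The Lefschetz fixed-point route gives \(\sum_i(-1)^i\operatorname{tr}(\sigma^{*}|\mathrm{H}^i)=\chi(\operatorname{Fix}\sigma)\), and since \(\operatorname{Fix}\sigma\) is in general a positive-dimensional smooth subvariety whose Euler characteristic can have either sign and magnitude comparable to \(\dim\mathrm{H}^m_{\mathrm{van}}\), the dominant term \(\pm\dim\mathrm{H}^m_{\mathrm{van}}\) yields no contradiction by itself. The ``dimension count of double covers'' is a substantial independent argument --- one must bound the dimension of the locus of smooth sections admitting an involution against \(h^0(P,\mathscr{L})-1\), which you neither set up nor execute. For comparison, the paper disposes of this step in one sentence, asserting that the involution acts by \(-1\) on all of \(\mathrm{H}^m(X)\) and re-applying Lemma~\ref{lemma:faithful-general-type}; whether that assertion is justified on the monodromy-invariant part \(\operatorname{im}\bigl(\mathrm{H}^m(P)\to\mathrm{H}^m(X_b)\bigr)\) is exactly the question you raise (the same subtlety already affects hypothesis (3) as literally stated, which concerns the orthogonal complement of the \(\mathbb{P}^N\)-classes, whereas big monodromy can hold only on the orthogonal complement of the image of \(\mathrm{H}^m(P)\)). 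So you have put your finger on a real soft spot in the paper's write-up, but flagging it is not closing it: your proposal leaves the theorem unproved precisely in the case where it is not a formal consequence of the machinery.
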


%The result in Theorem~\ref{theorem:cpz} is established by analyzing
%the action of automorphisms on the first-order deformation space 
%\(\mathrm{H}^1(X, T_X)\). 

%For complete intersections, 
%Javanpeykar and Loughran showed that a smooth complex complete intersection 
%of multidegree \((d_1,\ldots, d_c)\) being \((2, 2, 2)\) or satisfying 
%\(3\leq d_1 < d_2 \leq \cdots\) is automorphism-free, 
%see~\cite[Lemma 2.13-2.14]{Javanpeykar-Loughran:moduli-ci}.

Our proof of Theorem~\ref{theorem:main} and~\ref{theorem:trivial} are 
inspired by Katz and Sarnak~\cite[\S 10-11]{Katz-Sarnak:monodromy}, 
who showed that the monodromy group of the universal family of smooth
curves or smooth projective hypersurfaces is sufficiently large, which
leads to that the automorphism group of a general member in the family
is trivial. By considering the action of the automorphism group on cohomology, 
we generalize and refine this technique and apply it to complete intersections.

Let us consider a family of smooth projective subvarieties in the projective
space \(\mathbb{P}^{N}\) parameterized by an irreducible \(k\)-variety \(B\), 
given by the diagram:
\begin{equation*}
\begin{tikzcd}[column sep=small]
\mathcal{X} \ar[r,hook, "i"] \ar[rd,bend right=20,"\pi",swap] & \mathbb{P}^{N} \times B \ar[d] \\
{} & B.
\end{tikzcd}
\end{equation*}
Let \(\mathcal{L}:=i^*\mathcal{O}_{\mathbb{P}^N}(1)\) denote the natural
polarization. In Section~\ref{sec:criterion}, we demonstrate the 
following criterion for the triviality of the linear automorphism group
of a general fiber.

\begin{theorem}[= Theorem~\ref{theorem:mere}]
\label{theorem:intro-criterion}
Assume the following conditions for the family \(\pi:\mathcal{X}\to B\).
\begin{itemize}

\item The relative polarized automorphism group scheme 
\(\Aut_B(\mathcal{X}, \mathcal{L})\) is finite and unramifed over \(B\).

\item There exists a closed point \(t \in B\) such that 
\(\mathrm{Aut}_{L}(X_{t})\) acts faithfully on the \(\ell\)-adic 
cohomology \(\mathrm{H}^{m}_{\et}(X_{t};\mathbb{Q}_{\ell})\) 
for all \(\ell\neq \mathrm{char}(k)\), where \(m = \dim X_t\). 

\item The monodromy group associated to the local system 
\((R^{m}\pi_{\ast}\mathbb{Q}_{\ell})_\mathrm{prim}\)
is sufficiently big in the sense of Theorem~\ref{theorem:mere}\eqref{item:big-monodromy}.
\end{itemize}
Then, for a general point \(b \in B\), the linear automorphism group 
\(\mathrm{Aut}_{L}(X_{b})\) is either trivial or isomorphic to 
\(\mathbb{Z}/2\mathbb{Z}\).
\end{theorem}

In Section~\ref{sec:auto-ci}, we verify that these conditions hold
for complete intersections of types presented in Theorem~\ref{theorem:main}. 
Given a family \(\pi\colon \mathcal{X}\to B\) of smooth complete 
intersections of fixed multidegree, the group scheme 
\(\Aut_B(\mathcal{X}, \mathcal{L})\) is finite over \(B\) if the 
corresponding moduli stack is separated. For non-ruled polarized varieties, 
the separatedness follows from the classical result of
Matsusaka and Mumford's ~\cite{Matsusaka-Mumford:two-fundamental-theorems}.
Remarkably, Benoist affirmed that separatedness holds for all complete 
intersection except the hyperquadrics~\cite{Benoist:separated-moduli-ci}.

The interaction between and automorphism action on cohomology and the
monodromy group plays a central role. A ``sufficient big'' monodromy 
group constrains automorphisms. For hyperplane sections, the monodromy 
action can be characterized using vanishing cycles of Lefschetz 
pencils~\cite[\S 4-5]{Deligne:Weil-I}.

%We will use the ``Newton polygon above Hodge polygon'' theorem~\cite{Mazur:Frob-Hodge-fil} and Hodge structures of complete intersections~\cite{Illusie:ord-ci} to check the sufficient bigness.
Verifying the faithfulness of the cohomological action by the 
automorphism group occupies the main part of Section~\ref{sec:auto-ci}. 
The highlight of Theorem~\ref{theorem:intro-criterion} is that 
it suffices to check faithfullness for a single fiber in the family. 
We therefore consider specific Fermat-type complete intersections~\eqref{equation:Fermat-eqs},
and show its cohomological action by the auromorphism is faithful.
Lastly, the possibility of 
\(\mathrm{Aut}(X_{b}) \simeq \mathbb{Z}/2\mathbb{Z}\) is excluded by 
explicit computation in Proposition~\ref{proposition:no-invoultions}
by assuming \(\mathrm{char}(k)\neq 2\).

Our result in Theorem~\ref{theorem:main} may admit further enhancement. 
For instance, the assumption \(p\nmid d_1\) is indispensible for the
smoothness of the Fermat-type complete intersections in positive characteristic. 
However, one may choose other examples of complete intersections to remove 
this constriant. For the types that not covered in 
Theorem~\ref{theorem:main}, see Remark~\ref{remark:ci-quadric}.

We note that~\cite{Chen-Pan-Zhang:auto-coh-ci} also established the 
faithfulness of cohomological action. While our conclusions are similar, 
the proof strategies diverge: they first prove that a general complex 
complete intersection has trivial automorphism group and deduce the 
faithfulness, whereas we prove the faithfulness directly for a specific 
class of complete intersections and then conclude the triviality 
of automorphisms for general members.

\subsection*{Acknowledgements}
We are grateful to Hsueh-Yung Lin, Yujiro Kawamata for discussions 
improving some results. 
We would like to thank Xi Chen and Wenfei Liu for comments on the manuscript.

\section{A criterion of generic triviality of automorphisms}
\label{sec:criterion}

\begin{situation}\label{situation:family-of-varieties}%
  Let \(k\) be an algebraically closed field. Consider 
  a smooth projective family of algebraic varieties
  \begin{equation*}
    \begin{tikzcd}
      \mathcal{X} \ar[rd,bend right=20, swap, "\pi"]\ar[r,hook, "i"]
      & \mathbb{P}^N\times B \ar[d] \\
      & B
    \end{tikzcd}
  \end{equation*}
  parameterized by an irreducible \(k\)-variety \(B\). Let \(\mathcal{L}\) 
  be the polarization \(i^*\mathcal{O}_{\mathbb{P}^N}(1)\). Denote by
  \(\Aut_B(\mathcal{X}, \mathcal{L})\) the group scheme that represents 
  the \(\mathcal{L}\)-polarized automorphism functor for the morphism
  \(\pi: \mathcal{X}\to B\). Let \(\mathcal{G}\) be a closed subgroup 
  scheme of \(\Aut_B(\mathcal{X}, \mathcal{L})\). Let \(X_b\) be the 
  fiber over a closed point \(b \in B\), and \(G_b\) be the fiber of 
  \(\mathcal{G}\) over \(b\). 

  Fix a prime number \(\ell\) which is invertible in \(k\). Let 
  \(\mathrm{H}_{\et}^m(X_b; \mathbb{Q}_{\ell})\) be the \(\ell\)-adic
  cohomology group. The cup-product on 
  \(\mathrm{H}_{\et}^m(X_b; \mathbb{Q}_{\ell})\) gives a non-degenerate 
  symmetric (resp. anti-symmetric) form \(\psi\) if \(m\) is even 
  (resp. odd). The primitive part 
  \(\mathrm{H}_{\et}^m(X_b; \mathbb{Q}_{\ell})_\textrm{prim}\) 
  is the orthogonal complement of the image of the restriction map 
  \(\mathrm{H}_{\et}^m(\mathbb{P}^N; \mathbb{Q}_{\ell}) \to 
  \mathrm{H}_{\et}^m(X_b; \mathbb{Q}_{\ell})\). Let \(\pi_1(B, b)\) 
  denote the \'etale fundamental group of \(B\) at the point \(b\).
  The geometric monodromy group \(G_{\mathrm{geom},\ell}\) is defined
  to be the Zariski closure of the image of the monodromy representation
  \begin{equation*}
      \rho\colon \pi_1(B,b) \to \operatorname{Aut}(\mathrm{H}_{\et}^m
      (X_b;\mathbb{Q}_{\ell})_{\mathrm{prim}}, \psi).
  \end{equation*}

  \begin{theorem}
  \label{theorem:mere}
  Use the notation in Situation~\textup{\ref{situation:family-of-varieties}}.
  Suppose that the following hypotheses hold.
  \begin{enumerate}
    \item The group scheme \(\mathcal{G} \to B\) is finite and unramifed over \(B\).
    \label{item:finiteness}

    \item For every prime \(\ell\neq \mathrm{char}(k)\), there exists 
    a closed point \(t \in B\) such that the map
    \(G_{t} \to \operatorname{Aut}(\mathrm{H}_{\et}^m(X_t;\mathbb{Q}_{\ell}))\)
    is injective.
    \label{item:generic-faithfulness}

    \item For every prime \(\ell\neq \mathrm{char}(k)\), the geometric 
    monodromy group 
    \[
    G_{\mathrm{geom},\ell}=
    \begin{cases}
    \text{the symplectic group}~ \mathrm{Sp}(\mathrm{H}_{\ell}, \psi), & \text{if}~ m ~\text{odd},\\
    \text{the orthogonal group}~ \mathrm{O}(\mathrm{H}_{\ell}, \psi), & \text{if}~ m ~\text{even},
    \end{cases} 
    \]
    where \(\mathrm{H}_{\ell}\) represents the \(\mathbb{Q}_{\ell}\)-space
    \(\mathrm{H}_{\et}^m(X_b;\mathbb{Q}_{\ell})_{\mathrm{prim}}\).
    \label{item:big-monodromy}
  \end{enumerate}
  Then \(G_b\) is either a trivial group or isomorphic to \(\mathbb{Z}/2\mathbb{Z}\)
  for a general \(b\in B\).
\end{theorem}
\end{situation}

%\begin{theorem}
%  \label{theorem:faithful-action-auto}
%  In Situation~\textup{\ref{situation:family-of-varieties}}, let
%  the base field \(k\) be the complex numbers \(\mathbb{C}\).
% Assume the following hypotheses hold
% \begin{enumerate}
%    \item The group scheme \(\mathcal{G} \to B\) is a finite group
%    scheme over \(B\).

%    \item There exists \(b \in B\),  such that
%    \(G_{b} \to \operatorname{Aut}(\mathrm{H}^m(X_b;\mathbb{Q}))\)
%    is injective.

%    \item The infinitesimal Torelli theorem holds for \(\mathrm{H}^m(X_b)\).
%    That is, for every \(b \in B\),
%    there exist natural numbers \(p,q\), \(p + q = m\),
%    such that the natural map
%    \begin{equation*}
%      \mathrm{H}^{1}(X_b, T_{X_b}) \to \mathrm{Hom}(\mathrm{H}^q(X_b,\Omega_{X_b}^{p}), \mathrm{H}^{q+1}(X_b,\Omega_{X_b}^{p-1}))
%    \end{equation*}
%    is injective.
%    \label{item:infinitesimal-torelli}

%    \item The deformation of \(X_b\) is unobstructed.
%    \label{item:unobstruction}
%  \end{enumerate}
%  Then for every \(b\in B\), the morphism \(G_b \to \Aut(\mathrm{H}^m(X_b; \mathbb{Q}))\)
%  is injective.
%\end{theorem}

Hypotheses~\ref{theorem:mere}(\ref{item:finiteness}--\ref{item:big-monodromy})
are not sufficient to deduce the triviality of \(G_{b}\). For instance, 
let \(B\) be the space of \(6\) distinct ordered points on \(\mathbb{P}^1\), 
and \(\mathcal{X} \to B\) is the family of hyperelliptic curves branching 
over the given points, and \(\mathcal{G} = \operatorname{Aut}_B(\mathcal{X})\).
Then every \(X_b\) has an automorphism of order \(2\), although
the finiteness of automorphism groups, the faithfulness of the action
of \(\operatorname{Aut}(X_b)\) on \(\mathrm{H}^1(X_b)\), and the
bigness of the geometric monodromy group are all satisfied.

%\begin{lemma}%
%  \label{lemma:one-faithful-general-faithful}
%  Let \(\pi\colon \mathcal{X} \to B\) be a projective family
%  of algebraic varieties satisfying the conditions~\eqref{item:finiteness} 
%  and~\eqref{item:generic-faithfulness} in Theorem~\ref{theorem:mere}.
%  Then there is a Zariski open dense subset \(U\) of \(B\) such that,
%  for any \(b \in U\), the morphism
%  \(G_{b} \to \operatorname{Aut}(\mathrm{H}_{\et}^m(X_b;\mathbb{Q}_{\ell}))\)
%  is injective.
%\end{lemma}

%\begin{proof}
%  Let \(\mathcal{Z}\) be the closed subscheme of \(\mathcal{G}\) consisting of
%  homologically trivial automorphisms, i.e.
%  \(Z_b = \operatorname{Ker}(G_b\to \Aut(\mathrm{H}_{\et}^m(X_b;\mathbb{Q}_{\ell})))\).
%  By Hypothesis~\ref{theorem:mere}(\ref{item:finiteness}), 
%  \(\mathcal{Z} \to B\) remains a finite and unramified morphism. Hence the cardinality 
%  function \(b \mapsto \operatorname{Card}(Z_b)\) is upper semicontinuous. 
%  Consequently, there exists a Zariski open dense subset \(U\) of \(B\) 
%  where this function is constant and minimum. By
%  Hypothesis~\ref{theorem:mere}(\ref{item:generic-faithfulness}),
%  this minimum cardinality must be \(1\). Then \(Z|_U\) is 
%  the identity group scheme over \(U\). Therefore, for \(b \in U\),
%  the morphism \(G_b \to \operatorname{Aut}(\mathrm{H}_{\et}^m(X_b;\mathbb{Q}_{\ell}))\) 
%  is injective.
%\end{proof}

\begin{proof}[Proof of Theorem~\ref{theorem:mere}]
  The morphism \(p\) is finite and unramified. The fiber group \(G_b\)
  over any \(b\in B\) is finite and discrete, and the order 
  \(|G_b|\) is equal to the rank of the finite \(\mathcal{O}_{B,b}\)-module
  \((p_*\mathcal{O}_{\mathcal{G}})_b\). It follows that the function 
  \(b\mapsto |G_b|\) is upper semi-continuous. Then there is a non-empty 
  open subset where the number \(|G_b|\) reaches the minimum. 
  Consider the closed subgroup scheme 
  \[
  \mathcal{Z} \coloneqq \operatorname{Ker}(\mathcal{G}\to \mathrm{GL}(R^m\pi_*\mathbb{Q}_{\ell}))
  \]
  of cohomologically trivial automorphisms. Then 
  \(\mathcal{Z} \to B\) remains a finite and unramified morphism.
  Let \(B'\subset B\) be the open subset where the order function 
  \(b \mapsto |Z_b|\) reaches the minimum. 
  By Hypothesis~\ref{theorem:mere}(\ref{item:generic-faithfulness}),
  the minimum has to be \(1\), i.e. \(\mathcal{Z}\) is the 
  identity group scheme over \(B'\). 

  By the generic flatness, there exists a non-empty open subset 
  \(U'\subset B\) such that morphism \(p\colon \mathcal{G}\to B\) is
  flat over \(U'\). Then \(p\colon \mathcal{G}|_{U'}\to U'\) is a finite 
  \'etale morphism. Since \(B\) is irreducible, \(U'\cap B'\) is nonempty. 
  Hence we have a Zarski open dense subset \(U\subset B\) 
  such that \(\mathcal{G}/U\) is a finite \'etale covering, and the 
  group \(G_b\) acts faithfully on 
  \(\mathrm{H}^m_{\et}(X_b; \mathbb{Q}_{\ell})\) for any \(b\in U\).
  Let \((V,b)\to (U,b)\) be a finite \'etale connected covering 
  such that the pullback \(\mathcal{G}|_V\) becomes a constant 
  group scheme over \(V\). Then the monodromy action of \(\pi_1(V, b)\) 
  on \(G_b\) is invariant. Viewing \(G_b\) as a subgroup of 
  \(\mathrm{GL}(\mathrm{H}_{\ell})\), we have
  \[
  \rho(\gamma)\cdot g\cdot \rho(\gamma)^{-1}=g, ~\forall g\in G_b, ~\forall \gamma\in \pi_1(V, b)
  \]
  where \(\rho\) is the monodromy action of \(\pi_1(V, b)\).

  The induced map \(\pi_1(U, b)\to \pi_1(B, b)\) is surjective. Hence 
  the corresponding geometric monodromy group \(G_{\mathrm{geom}, U}\) 
  on \(U\) remains \(\mathrm{Sp}(\mathrm{H}_{\ell}, \psi)\) or 
  \(\mathrm{O}(\mathrm{H}_{\ell}, \psi)\) from Hypothesis~\ref{theorem:mere}\eqref{item:big-monodromy}.
  The fundamental group \(\pi_1(V,b)\) is a finite index 
  subgroup of \(\pi_1(U,b)\), and the geometric monodormy group 
  \(G_{\mathrm{geom}, V}\) on \(V\) also has finite index in 
  \(G_{\mathrm{geom}, U}\). Note that the identity component of 
  \(G_{\mathrm{geom}, U}\) is connected. There exists no continuous 
  map from a connected group to a discrete group. Hence the identity 
  component \(G_{\mathrm{geom}, V}^{\circ}\) is
  \(\mathrm{Sp}(\mathrm{H}_{\ell}, \psi)\) or 
  \(\mathrm{SO}(\mathrm{H}_{\ell}, \psi)\)

  A linear automorphism on \(\mathrm{H}_{\ell}\) that commutes with all 
  symplectic or special orthogonal matrices must be a scalar action
  by some \(\lambda\in \mathbb{Q}^{*}_\ell\). If the degree of the group 
  scheme \(\mathcal{G}/U\) is \(r\), i.e., the order of \(G_b\) is \(r\). 
  Then \(\lambda\) is a \(r\)-th root of unity in \(\mathbb{Q}_{\ell}\)
  for every prime \(\ell\neq \mathrm{char}(k)\). By Hensel's lemma, 
  we know that \(\lambda\) is either an \((\ell-1)\)-th root of unity 
  for \(\ell\) odd or \(\pm 1\) for \(\ell=2\). In the case of 
  \(\mathrm{char}(k) \neq 2\), let \(\ell=2\). Then \(\lambda = \pm 1\). 
  When \(\mathrm{char}(k) = 2\) and \(r\) is odd, we can choose an 
  odd prime \(\ell\) such that \(\ell-1\) is coprime to \(r\). Then 
  \(\lambda^{\ell-1}=1\) implies \(\lambda=1\) since \((\ell-1,r)=1\). 
  When \(\mathrm{char}(k) = 2\) and \(r\) is even, choose the odd prime 
  \(\ell\) such that \(\frac{\ell-1}{2}\) is coprime to \(r\). Then 
  we have \(\lambda^{\frac{\ell-1}{2}}=\pm 1\), which implies 
  \(\lambda=\pm 1\) since \((\frac{\ell-1}{2}, r)=1\). In conclusion, 
  the order of the group \(G_b\) for \(b\in U\) is bounded by \(2\).
\end{proof}

As an application of Theorem~\ref{theorem:mere}, we show that
a sufficiently ample, sufficiently general hypersurface in any smooth
complex projective variety is free of automorphisms. To begin with, 
we need a lemma.

\begin{lemma}%
  \label{lemma:faithful-general-type}
  Let \(X\) be an m--dimensional, smooth complex projective variety
  with very ample canonical bundle. Then the natural map
  \(\operatorname{Aut}(X) \to \operatorname{Aut}(\mathrm{H}^m(X; \mathbb{Q}))\)
  is injective.
\end{lemma}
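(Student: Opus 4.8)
The plan is to exploit the canonical embedding together with the Hodge-theoretic incarnation of the space of holomorphic top forms. Since \(\omega_X = \Omega^m_X\) is very ample, the complete linear system \(|\omega_X|\) realizes \(X\) as a closed subvariety \(\iota \colon X \hookrightarrow \mathbb{P} := \mathbb{P}(\mathrm{H}^0(X,\omega_X)^{\vee})\), the canonical embedding. Because \(\omega_X\) is intrinsic to \(X\), every \(\sigma \in \operatorname{Aut}(X)\) carries a canonical isomorphism \(\sigma^{*}\omega_X \cong \omega_X\), and therefore acts canonically, with no scalar ambiguity, on \(\mathrm{H}^0(X,\omega_X)\) by pullback of \(m\)-forms. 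This linear action induces a projective linear automorphism \(\bar\sigma\) of \(\mathbb{P}\), and by functoriality of the canonical map the embedding is equivariant, i.e. \(\iota \circ \sigma = \bar\sigma \circ \iota\).

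The key identification I would make is that the \(\operatorname{Aut}(X)\)-action on \(\mathrm{H}^0(X,\omega_X)\) is a Hodge component of the action on cohomology. By Hodge theory, \(\mathrm{H}^0(X,\Omega^m_X) = \mathrm{H}^{m,0}(X)\) is the top piece of the Hodge decomposition of \(\mathrm{H}^m(X;\mathbb{C})\), and since \(\sigma\) is biholomorphic, \(\sigma^{*}\) preserves this decomposition; the pullback action on holomorphic \(m\)-forms coincides with the restriction of \(\sigma^{*}\) on \(\mathrm{H}^m(X;\mathbb{C})\) to the summand \(\mathrm{H}^{m,0}(X)\). Now suppose \(\sigma\) lies in the kernel of \(\operatorname{Aut}(X) \to \operatorname{Aut}(\mathrm{H}^m(X;\mathbb{Q}))\), so that \(\sigma^{*} = \mathrm{id}\) on \(\mathrm{H}^m(X;\mathbb{Q})\). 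Extending scalars gives \(\sigma^{*} = \mathrm{id}\) on \(\mathrm{H}^m(X;\mathbb{C})\), hence \(\sigma^{*} = \mathrm{id}\) on \(\mathrm{H}^{m,0}(X) = \mathrm{H}^0(X,\omega_X)\). Thus the induced linear map on \(\mathrm{H}^0(X,\omega_X)\) is the identity, so \(\bar\sigma = \mathrm{id}_{\mathbb{P}}\). By equivariance, \(\iota \circ \sigma = \iota\), and since \(\iota\) is a closed immersion, and in particular injective, we conclude \(\sigma = \mathrm{id}_X\), which proves the asserted injectivity.

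The step requiring the most care, and the main obstacle, is making precise that the geometric action governing the canonical embedding is exactly the restriction of the cohomological action to \(\mathrm{H}^{m,0}\). The subtle point is linearization: for a general ample line bundle an automorphism acts on global sections only up to a scalar, which would still suffice here since a scalar-multiple-of-identity induces \(\bar\sigma = \mathrm{id}\) in \(\mathrm{PGL}\) and hence still forces \(\sigma = \mathrm{id}\); but for \(\omega_X\) the canonical linearization removes even this ambiguity and identifies the action on sections with \(\sigma^{*}|_{\mathrm{H}^{m,0}}\) on the nose. One should also note that the identification of the sheaf-cohomology pullback on \(\mathrm{H}^0(X,\Omega^m_X)\) with the topological action on \(\mathrm{H}^{m,0}(X)\) is the standard Dolbeault--de Rham comparison. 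Once these identifications are in hand, the remainder of the argument is formal.
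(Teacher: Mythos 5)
Your proof is correct and follows essentially the same route as the paper's: both use the canonical embedding \(X \hookrightarrow \mathbb{P}(\mathrm{H}^0(X,\omega_X)^{\vee})\) together with the Hodge-theoretic identification \(\mathrm{H}^0(X,\omega_X) = \mathrm{H}^{m,0}(X) \subset \mathrm{H}^m(X;\mathbb{C})\) to conclude that an automorphism acting trivially on \(\mathrm{H}^m(X;\mathbb{Q})\) induces the identity on the ambient projective space and hence on \(X\). Your additional remarks on linearization and equivariance merely spell out steps the paper leaves implicit.
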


\begin{proof}
  Since the canonical bundle \(\omega_X\) is very ample, we consider
  the canonical embedding \(X \hookrightarrow \mathbb{P}(\mathrm{H}^0(X, \omega_X)^{\vee})\).
  Let \(f\) be an automorphism of \(X\) such that \(f^{\ast}\) is the
  identity.  By the Hodge decomposition \(f^{\ast}\) acts identically
  on \(\mathrm{H}^0(X,\omega_X)\), which induces the idenity on
  \(\mathbb{P}(\mathrm{H}^0(X, \omega_X)^{\vee})\).
  Via the canonical embedding, we have \(f = \operatorname{Id}_X\).
\end{proof}

\begin{theorem}%
  \label{theorem:general-type-hypersurface}
  Let \(P\) be a smooth complex projective variety. Let \(\omega_P\)
  be the canonical sheaf on \(P\), and let \(\mathscr{L}\) be
  a sufficiently ample ample invertible sheaf such that \(\mathscr{L} \otimes \omega_P\)
  remains very ample. Then a general smooth section of \(\mathscr{L}\)
  has no non-trivial automorphisms.
\end{theorem}

\begin{proof}
  The proof is to applying Theorem~\ref{theorem:mere}. Let \(X\) be the zero 
  locus of any smooth transverse section of \(\mathscr{L}\). By the 
  adjunction formula and our assumption, the canonical divisor 
  \(\omega_X\cong \mathscr{L}\otimes \omega_P\) is very ample. 

  We denote by \(\mathrm{H}^m(X; \mathbb{C})_p\) the kernel of the 
  cup-product operator
  \[
  \cup \omega_X\colon \mathrm{H}^m(X; \mathbb{C})\to \mathrm{H}^{m+2}(X; \mathbb{C})
  \]
  with \(m=\dim X\). Let \(Q\) be the intersection form. 
  The Hodge-Riemann bilinear relation shows that the Hermitian form
  \(h:=\oplus_{p+q=m} i^{p-q}Q(\cdot,\bar{\cdot})\) 
  is positive definite on 
  \[
  \mathrm{H}^m(X; \mathbb{C})_p=\bigoplus_{p+q=m} \mathrm{H}^{p,q}\cap \mathrm{H}^m(X; \mathbb{C})_p.
  \]
  Any automorphism of \(X\) preserves \(\omega_X\) and \(h\). By Lemma~\ref{lemma:faithful-general-type}, 
  \(\operatorname{Aut}(X)\) can be regarded as 
  a subgroup of the unitary group 
  \(\mathrm{U}(\mathrm{H}^m(X; \mathbb{C})_p, h)\).
  Additionally, \(\operatorname{Aut}(X)\) acts on the integral cohomology 
  \(\mathrm{H}^n(X;\mathbb{Z})\). Therefore, \(\Aut(X)\) is a discrete
  subgroup in a compact group, which is necessarily finite.

  Let \(B\) be the open subset of smooth divisors in the linear system 
  \(|\mathscr{L}|\).  Let \(\mathcal{X} \to B\) be the universal family
  of the smooth divisors. The above discussion implies that the relative 
  automorphism group scheme
  \begin{equation*}
    \mathrm{Aut}_B(\mathcal{X}) \to B
  \end{equation*}
  is a quasi-finite morphism. The hypersurface \(X\) is not uniruled
  since \(\omega_X\) is ample. Matsusaka-Mumford's theorem
  \cite{Matsusaka-Mumford:two-fundamental-theorems} ensures that the 
  morphism \(\mathrm{Aut}_B(\mathcal{X}) \to B\) is proper and hence 
  finite. It is unramifed since a group scheme over \(\mathbb{C}\) 
  is always smooth. Thus, Hypothesis~\ref{theorem:mere}(\ref{item:finiteness}) 
  is verified with the polarization \(\mathcal{L}\) being the relative 
  canonical bundle \(\omega_{\mathcal{X}/B}\)

  By comparison theorem, the \(\ell\)-adic cohomology of a complex variety
  is isomorphic to the Betti cohomology of the associated analytic space. 
  In the following, when we apply Theorem~\ref{theorem:mere}, we will use 
  Betti cohomology in place of the \(\ell\)-adic cohomology. Then 
  Lemma~\ref{lemma:faithful-general-type} affirms 
  Hypothesis~\ref{theorem:mere}(\ref{item:generic-faithfulness})

  To study the monodromy group, we shall consider a Lefschetz pencil \(D\) 
  in the linear system \(|\mathscr{L}|\) such that \([X]\in D\), 
  and the vanishing cohomology 
  \[
    \mathrm{H}_{ev}^m(X, \mathbb{Q}) \coloneqq \mathrm{Ker}(\mathrm{H}^m(X, \mathbb{Q})
    \overset{i_*}{\longrightarrow} \mathrm{H}^{m+2}(P, \mathbb{Q}))
  \]
  where \(i_*\) is the Gysin map. In general \(\mathrm{H}_{ev}^m(X, \mathbb{Q})\)
  is a subspace of \(\mathrm{H}^m(X,\mathbb{Q})_{\mathrm{prim}}\). 
  In this proof, we instead use the vanishing cohomology and consider 
  the monodromy group of the representation
  \begin{equation}
  \label{eqs:monod-van-cohom}
    \rho \colon \pi_1(D-S) \to \operatorname{Aut}(\mathrm{H}_{ev}^m(X;\mathbb{Q}), \psi)
  \end{equation}
  where \(S\subset D\) is the finite subset of critical values of the pencil. 
  %The vanishing cohomology is generated 
  %by all vanishing cycles associated to the Lefschetz pencil \(D\).
  %The monodormy representation on \(\mathrm{H}_{ev}^m(X, \mathbb{Q})\) 
  %is irreducible, see~\cite{Deligne:Weil-I} or \cite[\S 2]{Voi-Hodge-II}.
  
  Let \(\mathrm{H}\) denote \(\mathrm{H}_{ev}^m(X;\mathbb{Q})\). 
  If \(m\) is odd, Kazhdan-Margulis theorem~\cite[Théorème~5.10]{Deligne:Weil-I}
  shows that the geometric monodromy group of \(\rho\)~\eqref{eqs:monod-van-cohom}
  is the sympelctic group \(\mathrm{Sp}(\mathrm{H}, \psi)\).
  If \(m\) is even, the geometric monodromy group is either a 
  finite group or the orthogonal group \(\mathrm{O}(\mathrm{H}, \psi)\), 
  see~\cite[Théorème~4.4.2]{Deligne:Weil-II}. The case of finite group 
  occurs when the intersection form \(\psi\) is definite. By the Hodge
  index theorem, it deduces that \(\mathrm{H}^{p,q}(X)=0\) unless 
  \(p=q\). However, in our case \(\mathrm{H}^{m,0}(X)\) is non-trivial 
  since the canonical divisor \(\omega_X\) is very ample. 

  Let \(g\in \Aut(X)\) be any automorphism on \(X\). If the action 
  \(g^*\) preserves the subspace \(\mathrm{H}\), the same treatment
  for the proof of Theorem~\ref{theorem:mere} deduces that \(g^*\) 
  acts as a scalar on \(\mathrm{H}\). In the rest of the proof, let 
  us show that \(H\) is \(\Aut(X)\)-stable.
  
  By the Hard Lefschetz theorem, there is an orthogonal decomposition 
  \[
  \mathrm{H}^m(X, \mathbb{Q})=\mathrm{H}_{ev}^m(X, \mathbb{Q})\oplus j^*\mathrm{H}^m(P, \mathbb{Q})
  \]
  with respect to the intersection form \(\psi\), where 
  \(j\colon X\hookrightarrow P\) is the inclusion map. The direct sum 
  is also a decomposition of the monodromy representation, for which 
  the monodromy action on \(j^*\mathrm{H}^m(P, \mathbb{Q})\) is trivial.
  Then the Zariski closure of the image of the representation
  \begin{equation}
  \label{eqs:whole-monod-rep}
    \varphi \colon \pi_1(D-S) \to \operatorname{Aut}(\mathrm{H}^m
      (X;\mathbb{Q}), \psi)
  \end{equation}
  is the connected subgroup \(\mathrm{Sp}(\mathrm{H})\times E\) 
  (resp. \(\mathrm{O}(\mathrm{H})\times E\)) in \(\mathrm{GL}(\mathrm{H}^m(X, \mathbb{Q}))\)
  if \(m\) is odd (resp. even), where \(E\) denotes the identity 
  matrix of the subspace \(j^*\mathrm{H}^m(P, \mathbb{Q})\).
  By the similar finite \'etale base change argument in the proof of 
  Theorem~\ref{theorem:mere}, any automorphism action \(g^*\) on 
  \(\mathrm{H}^m(X, \mathbb{Q})\) commutes with the monodromy actions
  ~\eqref{eqs:whole-monod-rep}, i.e., \(g^*\) is an isomorphism of 
  the representation \(\varphi\). Consider the composition map
  \[
  pr_2\circ g^*|_{\mathrm{H}}: \mathrm{H}\to j^*\mathrm{H}^m(P, \mathbb{Q})
  \]
  where \(pr_2\) is the natural projection. It is easy to verify that
  \(pr_2\circ g^*|_{\mathrm{H}}\) is also a morphism of the representations
  ~\eqref{eqs:whole-monod-rep}. However, \(\mathrm{H}\) is an irreducible
  \(\pi_1(D\setminus S)\)-module, and \(j^*\mathrm{H}^m(P, \mathbb{Q})\) is 
  the trivial representation. It follows that \(pr_2\circ g^*=0\). 
  In conclusion, the action \(g^*\) on \(\mathrm{H}\) commutes with the monodromy group 
  \(\mathrm{Sp}(\mathrm{H})\) or \(\mathrm{SO}(\mathrm{H}, \psi)\), 
  thus \(g^*\) acts as a scalar on \(\mathrm{H}\).
  
  Finally we show that \(g\) is in fact the identity map on \(X\).
  Set \(\mathrm{H}^0_{ev}(X, \omega_X)\coloneqq \mathrm{H}^0(X, \omega_X)\cap \mathrm{H}^m_{ev}(X, \mathbb{C})\).
  Let \(U=P-X\) be the open complement. There is the exact sequence 
  \[
  \to \mathrm{H}^{m+1}(U, \mathbb{C}) \overset{res}{\to} \mathrm{H}^m(X, \mathbb{C})\overset{i_*}{\to} 
  \mathrm{H}^{m+2}(P, \mathbb{C})\to
  \]
  where \textit{res} is induced by the residue map. It is also an exact sequence of mixed Hodge 
  structures. Restricting on the Hodge filtrations yields the 
  following 
  \[
   F^{m+1}\mathrm{H}^{m+1}(U, \mathbb{C}) \overset{res}{\to} 
   F^m\mathrm{H}^m(X, \mathbb{C})\overset{i_*}{\to} F^{m+1}\mathrm{H}^{m+2}(P, \mathbb{C}) .
  \]
  Note that \(F^m\mathrm{H}^m(X, \mathbb{C})=\mathrm{H}^0(X, \omega_X)\). Hence 
  \(\mathrm{H}^0_{ev}(X, \omega_X)\) is the kernel of the homomorphism \(i_*\). 
  Since \(X\) is sufficiently ample in \(P\), It follows from~\cite[Thm. 6.5]{Voi-Hodge-II}
  that the Hodge filtration \(F^{m+1}\mathrm{H}^{m+1}(U, \mathbb{C})\) 
  is the image of
  \[
    \mathrm{H}^0(P, \omega_P(X))\to \mathrm{H}^{m+1}(U, \mathbb{C}),
  \]
  where \(\omega_P(X)=\omega_P\otimes \mathscr{L}\) is the sheaf of 
  meromorphic forms of degree \(m+1\) with simple poles along \(X\).
  Thus \(\mathrm{H}^0_{ev}(X, \omega_X)\) is identified with 
  \[
    \mathrm{Im}(\mathrm{H}^0(P, \omega_P(X))\to \mathrm{H}^0(X, \omega_X)).
  \]
  This description of \(\mathrm{H}^0_{ev}(X, \omega_X)\) yields the following 
  diagram
  \[
    \begin{tikzcd}
      X \ar[rr, hook] \ar[d, hook] && P \ar[d, hook] \\
      \mathbb{P}(\mathrm{H}^0(X, \omega_X)^{\vee}) \ar[r, dashrightarrow] & 
      \mathbb{P}(\mathrm{H}^0_{ev}(X, \omega_X)^{\vee}) \ar[r, hook] & \mathbb{P}(\mathrm{H}^0(P, \omega_P(X))^{\vee}).
    \end{tikzcd}
  \]
  The right vertical map is an embedding since \(\omega_P(X)\) is very ample. 
  Then the composition \(X\to \mathbb{P}(\mathrm{H}^0_{ev}(X, \omega_X)^{\vee})\) 
  is an \(\Aut(X)\)-equivariant embedding. Therefore \(g\) must be 
  the identity map on \(X\) since the scalar matrix \(g^*\) acts identically on 
  \(\mathbb{P}(\mathrm{H}^0_{ev}(X, \omega_X)^{\vee})\).
  \end{proof}

  The original proof of Theorem~\ref{theorem:general-type-hypersurface} 
  is incomplete. The argument presented is communicated with H.-Y.~Lin and E.~Shinder.

\section{Automorphism of complete intersection in projective spaces.}
\label{sec:auto-ci}

Let \(k\) be an algebraically closed field. Let \((d_1,\ldots,d_c;n)\)
be a sequence of positive integers with \(2\leq d_i\leq d_{i+1}\), \(n>c\).
We say a complete intersection \(X\subset \mathbb{P}^n_k\) is of type
\((d_1,\ldots,d_c;n)\) if the ideal of \(X\) is generated by \(c\) 
homogeneous polynomials with the prescribed degrees \(\{d_1, \ldots, d_c\}\). 
The goal of this section is proving the following:

\begin{theorem}
\label{theorem:gen-trivial-auto}
Let \(k\) be an algebraically closed field of characteristic
\(p \neq 2\). Suppose that \((d_{1},\ldots,d_{c})\) is a collection 
of positive integers satisfying one of the following conditions
\begin{itemize}
\item \(3 \leq d_{1} \leq d_{2} \leq \cdots \leq d_{c}\) and \(p \nmid d_{1}\);
\item \(c \geq 3\) and \(2 = d_{1} = d_{2} = d_{3} \leq \cdots \leq d_{c}\).
\end{itemize}
Then a general smooth complete intersection \(X\subset \mathbb{P}^{n}_{k}\) 
of multidegree \((d_{1},\ldots,d_{c})\) with \(c < n\) has no-nontrivial 
linear automorphisms unless \(X\) is a cubic curve.
\end{theorem}

%\begin{theorem}
%\label{theorem:faithful-auto-coh}
%Let \(X\subset \mathbb{P}^{n}_\mathbb{C}\) be a smooth complex complete intersection
%of type \(T:=(d_1,\dots, d_c ; n)\). Suppose that \(X\) is
%one of the following cases.
%    \begin{enumerate}
%        \item  The canonical divisor \(\omega_X\) is very ample.
%        \item \(X\) is Fano or Calabi-Yau, and the type \(T\) satisfies
%        one of the following conditions
%        \begin{itemize}
%            \item \(3\leq d_1\leq d_2\cdots \leq d_c\);
%           \item \(2=d_1=d_2=d_3\leq \cdots \leq d_c\) with \(c\geq 3\).
%        \end{itemize}
%        \item \(X\) is an even-dimensional complete intersection
%        of two quadrics.
%    \end{enumerate}
%Then \(\Aut_L(X)\) acts faithfully on the middle cohomology group
%\(\mathrm{H}^{n-c}(X, \mathbb{Q})\).
%\end{theorem}

\begin{observation}
\label{observation:reduction}
Let \(X\) be a general complete intersection of multidegree
\((d_1, \ldots, d_c)\), and \(F_{1},\ldots,F_{c}\) denote the defining
polynomials of \(X\) with \(\deg F_i=d_i\). Suppose that \(r\) is the
maximal number such that \(d_1=\cdots=d_r<d_{r+1}\). Then the action 
of any linear automorphism of \(X\) preserves the ideal 
\(( F_1,\ldots, F_r)\). Regard \(X\) as a subscheme of the complete 
intersection \(Y\) defined by \((F_1,\ldots, F_r)\), then we have
\(\Aut_L(X)\subset \Aut_L(Y)\). To prove \(\Aut_L(X) = \{1\}\), 
it suffices to prove a general complete intersection with equal 
multidegrees has no non-trivial automorphisms.
\end{observation}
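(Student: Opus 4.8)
The plan is to realize both automorphism groups as stabilizers inside \(\mathrm{PGL}_{n+1}\) and to exploit the action of an automorphism on the graded defining ideal. Write \(S = k[x_0,\ldots,x_n]\) for the homogeneous coordinate ring and \(I_X \subset S\) for the saturated homogeneous ideal of \(X\), so that \(I_X = (F_1,\ldots,F_c)\) since \(X\) is a complete intersection. A smooth complete intersection with all \(d_i \geq 2\) and \(c<n\) is nondegenerate, so any linear automorphism extends \emph{uniquely} to an element \(g \in \mathrm{PGL}_{n+1}\); this identifies \(\Aut_L(X)\) with the stabilizer \(\{g \in \mathrm{PGL}_{n+1} : g(X) = X\}\), and likewise \(\Aut_L(Y)\) with the stabilizer of \(Y\). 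Choosing a linear representative of \(g\), its action on \(S\) is graded, and since \(g\) preserves \(X\) it preserves \(I_X\) together with each graded piece \((I_X)_e\).

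The key step is to identify the lowest nonzero graded piece of \(I_X\). Because \(I_X = (F_1,\ldots,F_c)\), one has \((I_X)_e = \sum_i S_{e-d_i} F_i\), and for \(e = d_1\) the summands with \(d_i > d_1\) vanish while those with \(d_i = d_1\) contribute \(S_0 F_i = k \cdot F_i\); as \(r\) is the number of indices with \(d_i = d_1\), this gives \((I_X)_{d_1} = \langle F_1,\ldots,F_r\rangle\). Thus the invariant subspace \((I_X)_{d_1}\) is exactly the span of \(F_1,\ldots,F_r\), so \(g\) carries this span to itself and therefore preserves the ideal \((F_1,\ldots,F_r)\) it generates. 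Consequently \(g(Y) = Y\), i.e. \(g \in \Aut_L(Y)\), which yields the inclusion \(\Aut_L(X) \subseteq \Aut_L(Y)\) as subgroups of \(\mathrm{PGL}_{n+1}\).

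It remains to transfer genericity. As \(X\) varies over general complete intersections of type \((d_1,\ldots,d_c)\), its defining data \((F_1,\ldots,F_c)\) varies over a dense open subset of \(\prod_{i=1}^c H^0(\mathbb{P}^n,\mathcal{O}(d_i))\); the projection onto the first \(r\) factors is open and dominant, so \((F_1,\ldots,F_r)\) is general of type \((d_1,\ldots,d_1)\), and by Bertini the corresponding \(Y\) (of dimension \(n-r>0\)) is smooth. Hence \(Y\) is a general smooth complete intersection of equal multidegree, and granting that such a \(Y\) satisfies \(\Aut_L(Y) = \{1\}\), the inclusion above forces \(\Aut_L(X) = \{1\}\), completing the reduction. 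The only delicate points are the identification \((I_X)_{d_1} = \langle F_1,\ldots,F_r\rangle\) --- where the complete intersection structure (saturatedness of \((F_1,\ldots,F_c)\)) is genuinely used --- and the openness of the parameter projection guaranteeing that a general \(X\) produces a general \(Y\); neither presents a real obstacle, so the statement is essentially a bookkeeping consequence of the graded action on the ideal.
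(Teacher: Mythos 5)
Your proposal is correct and follows the same route the paper's observation implicitly relies on: a linear automorphism preserves the saturated ideal \(I_X\) degree by degree, the degree-\(d_1\) piece is exactly \(\langle F_1,\ldots,F_r\rangle\), hence the automorphism stabilizes \(Y\), giving \(\Aut_L(X)\subset\Aut_L(Y)\). You simply make explicit the details the paper leaves tacit (saturatedness of the complete intersection ideal, uniqueness of the extension to \(\mathrm{PGL}_{n+1}\), and the transfer of genericity from \(X\) to \(Y\)), all of which are handled correctly.
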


\begin{remark}
   \label{remark:ci-quadric}
   The complete intersections of type \((d_1, \ldots, d_c)\) with 
   \(2 = d_1 \leq d_2 < d_3 \leq \cdots \leq d_c\) are not included
   in Theorem~\ref{theorem:gen-trivial-auto}. According to Observation
   ~\ref{observation:reduction}, the corresponding \(Y\) are 
   hyperquadrics or complete intersections of two quadrics so that \(Y\)
   admit non-trivial linear automorphisms, see~\cite{Reid-ci-quadric}. 
   To treat these types, one may find a way to characterize the 
   autmorphism group for complete intersections of type 
   \((2,d,\ldots,d;n)\) and \((2,2, d,\ldots,d;n)\) for \(d > 2\).
\end{remark}

To apply Theorem~\ref{theorem:mere} to complete intersections with
equal multidegrees, we verify Hypothesis
~\ref{theorem:mere}\eqref{item:generic-faithfulness} for a particular
complete intersection of Fermat type. We will characterize its 
automorphism group. Using the work by Aoki~\cite{Aoki:ci-Fermat-type} 
on the decomposition of its cohomology group , we can prove that the 
automorphism group acts faithfully on the cohomology group.

\subsection*{Automorphism of complete intersections of Fermat type}
    \label{subsec:Auto-ci-Fermat}
Let \(k\) be an algebraically closed field. Fix natural numbers
\(n\geq 3, r\geq 2\), and \(d\geq 2\), where \(d\) is invertible in 
\(k\). Let \(X:=X_{n,r,d}\subset \mathbb{P}^{n}_k\) be the complete 
intersection defined by the Fermat-like equations:
\begin{equation}
    \label{equation:Fermat-eqs}
    \left\{
        \begin{matrix}
        x_0^d+\cdots +x_n^d=0,\\

        \lambda_0 x_0^d+\cdots +\lambda_n x_n^d=0,\\

        \vdots\\

        \lambda_0^{r-1} x_0^d+\cdots+\lambda_n^{r-1} x_n^d=0,
        \end{matrix}
    \right.
\end{equation}
where \(\lambda_i\neq \lambda_j\in k^*, \forall i, j\). It is known 
that \(X\) is non-singular~\cite[Prop. 2.4.1]{Terasoma:fermat-complete-intersection}.
Let \(\mu_d\) be the group of $d$-th roots of unity in \(k\),
and $\mu_d^{n+1}$ be the $(n+1)$-th product of $\mu_d$.
Denote by $G_n^d$ the quotient group $\mu_d^{n+1}/\Delta(\mu_d)$,
where $\Delta: \mu_d\to \mu_d^{n+1}$ is the diagonal embedding.
The group $G_n^d$ naturally acts on the variety $X$:
\[
[\xi_0,\ldots, \xi_n]\cdot (x_0:\ldots: x_n)\mapsto (\xi_0 x_0:\ldots: \xi_n x_n), ~\xi_i\in \mu_d
\]

\begin{proposition}
\label{proposition:Aut-Fermat-ci}
Let $X:=X_{n,r,d}\subset \mathbb{P}^n_k$ be the Fermat complete 
intersection defined by the equations above. Denote by $\Aut_L(X)$ 
the group of linear automorphisms of \(X\). There is a group 
homomorphism 
\[
\sigma\colon \Aut_L(X)\to \mathfrak{S}_{n+1},
\]
where $\mathfrak{S}_{n+1}$ is the permutation group of $n+1$ elements.
such that the kernel of \(\sigma\) is the group \(G^d_n\). Moreover, 
if \((\lambda_{0},\ldots,\lambda_{n})\in k^{n+1}\) is a general point,
then \(\sigma\) is the trivial map, i.e., \(\Aut_L(X)=G^d_n\).
\end{proposition}

\begin{proof}
Let \((x_0\colon \dots \colon x_n)\) be a coordinate in \(\mathbb{P}^n_k\).
The action of a linear automorphism \(g\in \Aut_L(X)\) transforms 
each \(x_i\) to a linear form
\[
g_i:=g^*(x_i)=\sum_{j=0}^{n} a_{ij}x_j, ~a_{ij}\in k.
\]
Since the ideal of $X$ is generated by the
Fermat-like equations~\eqref{equation:Fermat-eqs}, we observe
\begin{equation}
\label{equation:linear-transform}
g^*(x_0^d+\cdots +x_n^d)=\sum_{i=0}^{n} g_i^d=\sum_{i=0}^{n} c_ix_i^d,
\text{~for some~} c_i\in k.
\end{equation}
Differentiating both sides of \eqref{equation:linear-transform}
with respect to \(\frac{\partial}{\partial x_j}\), we obtain
    \[
    d\sum_{i=0}^{n} g^{d-1}_i \cdot \frac{\partial g_i}{\partial x_j}
    = d\sum_{i=0}^{n} g^{d-1}_i a_{ij}= d\cdot c_j \cdot x_j^{d-1}.
    \]
Focusing on the coefficients of the monomial $x_0^{d-1}$ on both sides. 
There is the relation
    \[
        d\cdot \sum_{i=0}^{n} a_{ij}a_{i0}^{d-1}=
        \begin{cases}
            0,          & j\neq 0;\\
            d\cdot c_0, & j=0.
        \end{cases}
    \]
Let $A$ represent the matrix $(a_{ij})$. Since \(d\) is invertible in 
the field \(k\), we have
    \[
    (a_{00}^{d-1},\ldots, a_{n0}^{d-1})\cdot A=(c_0,0,\ldots,0).
    \]
Similarly,
    \[
    g^*(\lambda_0 x_0^d+\cdots +\lambda_n x_n^d)=\sum_{i=0}^n \lambda_i g_i^d
    =\sum_{i=0}^{n} c_i'x_i^d, \text{~for some~} c'_i\in k.
    \]
Following the same discussion as above, we have
\[
(\lambda_{0} a_{00}^{d-1},\ldots, \lambda_n a_{n0}^{d-1})\cdot A=(c_0',0, \ldots, 0).
\]
Note that the matrix $A$ is invertible, and \(c_0, c'_0\) are non-zero.
Hence \((a_{00}^{d-1},\ldots, a_{n0}^{d-1})\) is proportional 
to \((\lambda_{0} a_{00}^{d-1},\ldots, \lambda_n a_{n0}^{d-1})\).
Since \(\lambda_{0},\lambda_{1},\ldots,\lambda_{n}\) are pairwise 
distinct, only one element among \(\{a_{00},\ldots, a_{n0}\}\) is 
non-zero. Considering the coefficients of $x_j^{d-1}$ for each $j$, 
the same conclusion holds for each tuple \((a_{0j},\ldots,a_{nj})\). 
Therefore, \(g\) determines a permutation \(\sigma_g\in \mathfrak{S}_{n+1}\) 
such that \(g^*(x_i)=a_{\sigma_g(i)i}x_{i}\). The assignment
\(g\mapsto \sigma_g\) thus establishes a group homomorphism
\[
\sigma: \Aut_L(X)\to \mathfrak{S}_{n+1}.
\]

In the following, we demonstrate that the kernel of $\sigma$ is $G^d_n$.
Suppose that $g\in \Ker \sigma$. Then $g$ is represented by a diagonal 
matrix $(a_{ii})_{0\leq i\leq n}$. The action of \(g\) on the 
Fermat polynomials \(\{\sum\limits_{i=0}^{n} \lambda_{i}^{j} x_{i}^{d}
~|~ 0\leq j\leq r-1\}\) yields an $r\times r$ matrix $B$ satisfying 
the relationship
\begin{equation}
\label{equation:diagonal-action}
B \cdot
\begin{pmatrix}
1  & \cdots & 1\\
\lambda_0 &\cdots & \lambda_n\\
\vdots & &\vdots\\
\lambda_0^{r-1} &\cdots & \lambda_n^{r-1}
\end{pmatrix}
=
\begin{pmatrix}
1  & \cdots & 1\\
\lambda_0 &\cdots & \lambda_n\\
\vdots & &\vdots\\
\lambda_0^{r-1} &\cdots & \lambda_n^{r-1}
\end{pmatrix}
\cdot
\begin{pmatrix}
a^d_{00} & &\\
& \ddots &\\
& & a^d_{nn}
\end{pmatrix}
\end{equation}
Let us view each column\((1,\lambda_i,\ldots, \lambda_i^{r-1})^{\mathsf{T}}\)
%\(\begin{pmatrix}
%1\\
%\lambda_i\\
%\vdots\\
%\lambda^{r-1}_i
%\end{pmatrix}\)
as a vector \(v_i\) in the \(r\)-dimensional vector space \(k^{r}\). 
Then \(B\) represents a linear map with eigenvectors \(\{v_0,\ldots, v_n\}\)
and eigenvalues \(\{a_{00}^d, \ldots, a_{nn}^d\}\). Note that any $r$ 
elements among \(\{v_0,\ldots,v_n\}\) form a basis of \(k^r\) since 
the basis corresponds to a non-degenerate Vandermonde matrix. 
It implies that \(a_{ii}^d=a_{jj}^d\) for all \(i,j\). Consequently, 
the matrix \((a_{ii})\) can be represented by
\[
\begin{pmatrix}
\xi_0\cdot a_{00} & &\\
& \ddots & \\
& & \xi_n\cdot a_{00}
\end{pmatrix}
,~\textrm{for some}~\xi_i\in \mu_d.
\]
Hence, the action of $g$ on $X$ is equivalent to the action of 
$[\xi_0,\cdots,\xi_n]\in G_n^d$.

Now let us prove the final assertion. Given a linear automorphism
\(g\), let \(\sigma\) denote the permutation
\(\sigma(g)\in \mathfrak{S}_{n+1}\), and \((a_{\sigma(i)i})\)
be the matrix representing \(g\). Again, the action of $g^*$ on the 
Fermat polynomials~\eqref{equation:Fermat-eqs} yields an $r\times r$ 
matrix $B:=(b_{ij})_{0\leq i,j\leq r-1}$ such that
\[
B \cdot
\begin{pmatrix}
1  & \cdots & 1\\
\lambda_0 &\cdots & \lambda_n\\
\vdots & &\vdots\\
\lambda_0^{r-1} &\cdots & \lambda_n^{r-1}
\end{pmatrix}
=
\begin{pmatrix}
1  & \cdots & 1\\
\lambda_0 &\cdots & \lambda_n\\
\vdots & &\vdots\\
\lambda_0^{r-1} &\cdots & \lambda_n^{r-1}
\end{pmatrix}
\cdot
(a_{\sigma(i)i}).
\]
By this relation, the first two rows of the matrix $B$ correspond 
to two polynomials
\[
q(t)=\sum_{j=0}^{r-1}b_{0j}t^j, ~p(t)=\sum_{j=0}^{r-1}b_{1j}t^j
\]
satisfying the interpolation data
\[
q(\lambda_i)=a_{\sigma(i)i}, ~p(\lambda_i)=\lambda_{\sigma(i)}a_{\sigma(i)i}, 
\textrm{~for~} 0\leq i\leq n.
\]

Consider the Lagrangian polynomial
\[
L(t):=\sum_{k=0}^{n} a_{\sigma(k)k} \prod_{0\leq j\leq n, j\neq k} 
\frac{t-\lambda_j}{\lambda_k-\lambda_j}
\] 
which interpolates the given data \((\lambda_i, \alpha_{\sigma(i)i})\).
Assume that the constants \(\{\alpha_{\sigma(i)i}\}_{0\leq i\leq n}\) 
are not all equal. It is known that, for a generic choice of the tuple 
\((\lambda_0, \ldots, \lambda_n)\) in \(k^{n+1}\), the degree of 
\(L(t)\) is \(n\). Hence \(L(t)=q(t)\) since \(L(t)-q(t)\) contains 
\(n+1\) distinct roots. However, we have \(\deg q(t) = r-1 < n\) 
as a contradiction. Therefore, all \(a_{\sigma(i)i}\) must be equal, 
and \(q(t)\) is the constant polynomial \(b_{00}\). It follows that 
the second interpolation data becomes
\[
p(\lambda_i)=b_{00}\lambda_{\sigma(i)}.
\]
Applying the Lemma \ref{interpolation-poly} below, such a polynomial 
$p(t)$ exists for a generic choice of $(\lambda_0, \ldots, \lambda_n)$ 
in \(k^{n+1}\) only if $\sigma=\operatorname{Id}$. Thus, the last 
assertion follows.
\end{proof}

\begin{lemma}\label{interpolation-poly}
Let \(\sigma\in \mathfrak{S}_{n+1}\) be a non-trivial permutation,
and let \(r\) be an integer with \(1<r<n\). Suppose that 
\((\lambda_0,\ldots, \lambda_n)\in k^{n+1}\) is a general point. 
Then there exists no polynomial function $p(t)\in k[t]$ with
$\deg p(t)\leq r-1$ that interpolates the $n+1$  data points
$(\lambda_0, \lambda_{\sigma(0)}),\ldots, (\lambda_n, \lambda_{\sigma(n)})$,
i.e.,
    \[
    p(\lambda_i)=\lambda_{\sigma(i)}, 0\leq i\leq n.
    \]
\end{lemma}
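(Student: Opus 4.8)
The plan is to reduce the non-existence of a low-degree interpolant to the non-vanishing of a single explicit rational function of \(\lambda=(\lambda_0,\dots,\lambda_n)\). First I would work over the dense open locus \(U\subset k^{n+1}\) where the coordinates \(\lambda_i\) are pairwise distinct. On \(U\) the \(n+1\) data points \((\lambda_i,\lambda_{\sigma(i)})\) admit a unique interpolating polynomial \(L_\lambda(t)\) of degree \(\le n\), given by Lagrange's formula. Since \(r-1<n\), any candidate \(p\) with \(\deg p\le r-1\) would agree with \(L_\lambda\) at \(n+1\) points while both have degree \(\le n\), hence \(p=L_\lambda\); so a degree \(\le r-1\) interpolant exists exactly when \(\deg L_\lambda\le r-1\), which in particular forces the coefficient of \(t^n\) in \(L_\lambda\) to vanish. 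By the Lagrange formula this coefficient is
\[
c_n(\lambda)=\sum_{i=0}^{n}\frac{\lambda_{\sigma(i)}}{\prod_{j\neq i}(\lambda_i-\lambda_j)}.
\]
It therefore suffices to show that \(c_n\not\equiv 0\) whenever \(\sigma\neq\operatorname{Id}\): then \(U\cap\{c_n\neq 0\}\) is a dense open subset on which \(\deg L_\lambda=n>r-1\), so no interpolant of degree \(\le r-1\) exists, which is exactly the asserted genericity of non-existence.

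The main step, then, is the non-vanishing of \(c_n\), and I would prove it by degeneration. As \(\sigma\) is non-trivial, fix an index \(a\) with \(\sigma(a)\neq a\) and put \(i_0:=\sigma^{-1}(a)\), so that \(i_0\neq a\) since \(\sigma(a)\neq a\). I would let \(\lambda_a\to\infty\) while keeping the other coordinates generic, and track each summand: the term \(i=a\) has bounded numerator \(\lambda_{\sigma(a)}\) and denominator of size \(\lambda_a^{\,n}\), so it tends to \(0\); every term \(i\neq a\) with \(\sigma(i)\neq a\) has constant numerator and a denominator factor \((\lambda_i-\lambda_a)\to\infty\), so it also tends to \(0\); and the single remaining term \(i=i_0\), whose numerator equals \(\lambda_{\sigma(i_0)}=\lambda_a\), tends to the nonzero limit \(-1/\prod_{j\neq i_0,a}(\lambda_{i_0}-\lambda_j)\). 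Hence \(\lim_{\lambda_a\to\infty}c_n\) is a nonzero rational function of the remaining variables, so \(c_n\not\equiv 0\).

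I expect the genuine obstacle to be making this non-vanishing robust for \emph{all} non-trivial \(\sigma\), transpositions included. A tempting alternative — extracting the residue of \(c_n\) along the diagonal \(\lambda_a=\lambda_{\sigma(a)}\) — produces a numerator proportional to \(\lambda_{\sigma(a)}-\lambda_{\sigma^2(a)}\), which degenerates to zero on that divisor precisely when \(a\) lies in a \(2\)-cycle of \(\sigma\) (in particular for every transposition), so this shortcut does not settle the claim on its own. Sending one variable to infinity circumvents the difficulty uniformly, because it isolates the unique index \(i_0=\sigma^{-1}(a)\) that feeds \(\lambda_a\) into a numerator and whose contribution survives the limit uncancelled. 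Once \(c_n\not\equiv 0\) is in hand, the reduction of the first paragraph completes the proof; the only hypotheses used are \(\sigma\neq\operatorname{Id}\), \(r<n\) (so that \(r-1<n\)), and that \(k\) is infinite.
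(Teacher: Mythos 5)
Your proof is correct, but it takes a genuinely different route from the paper's. The paper argues by a dimension count on the incidence variety \(\Gamma_\sigma\subset k^r\times k^{n+1}\) of pairs (interpolant, node vector): using that any \(p\) occurring there satisfies \(p^{\circ N}(\lambda_i)=\lambda_i\) with \(N\) the order of \(\sigma\), it splits into the case \(p^{\circ N}(t)\neq t\) (the first projection is quasi-finite, so \(\dim\Gamma_\sigma=r<n+1\)) and the case \(p^{\circ N}(t)=t\) (then \(p\) is a special linear form, and the fiber equations \(\alpha+\beta y_i=y_{\sigma(i)}\) impose at least two constraints), concluding in both cases that the projection to \(k^{n+1}\) is not dominant. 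You instead use uniqueness of the Lagrange interpolant of degree \(\le n\): a degree \(\le r-1\) interpolant exists only if the explicit leading coefficient \(c_n(\lambda)=\sum_i \lambda_{\sigma(i)}/\prod_{j\neq i}(\lambda_i-\lambda_j)\) vanishes, and you show \(c_n\not\equiv 0\) by sending \(\lambda_a\to\infty\) for an index \(a\) with \(\sigma(a)\neq a\), isolating the single surviving term \(i_0=\sigma^{-1}(a)\). Your argument is more elementary and more explicit: it exhibits a concrete hypersurface \(\{c_n=0\}\) outside of which the conclusion holds, it does not use the hypothesis \(r>1\), and it proves the stronger statement that generically no interpolant of degree \(\le n-1\) exists; the paper's incidence-variety count buys robustness (no formula needed) and absorbs the degenerate ``linear \(p\)'' case into a clean case distinction. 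One point you should phrase carefully: since \(k\) may have positive characteristic, the ``limit'' must be read algebraically --- view \(c_n\) as a rational function of \(\lambda_a\) over \(k(\lambda_j: j\neq a)\); every summand is regular at the point at infinity of \(\mathbb{P}^1\) in the variable \(\lambda_a\), with value \(0\) except for the term \(i_0\), whose value is \(-1/\prod_{j\neq i_0,a}(\lambda_{i_0}-\lambda_j)\neq 0\). That is exactly your computation, involves no analytic limits, and makes the proof valid in all characteristics.
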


\begin{proof}
% polynomial interpolation argument
%We denote by $\Pi_{n}$ the linear space of polynomials in one variable of degree at most $n-1$. Consider the Lagrange polynomial
%\[
%L_{n,j}(t)=\prod_{0\leq k\leq n, k\neq j}\frac{t-\lambda_k}{\lambda_j-\lambda_k}
%\]
%The polynomial interpolation associated to the nodes $\lambda_i$ gives a canonical linear bijection $\Pi_{n}\to k^{n+1}$ given by
%\[
%f(t)\mapsto (f(\lambda_0), \ldots, f(\lambda_n))
%\]
%The inverse map $k^{n+1}\to \Pi_{n}$ is given by
%\[
%(a_0,\ldots,a_n)\mapsto \sum_{j=0}^n a_jL_{n,j}(t).
%\]
%Take a generic $(\lambda_0, \lambda_1,\ldots, \lambda_n)$ (resp. $(\lambda'_0, \lambda_1,\ldots, \lambda_n)$). Suppose that there exists $q(t)\in \Pi_r$ (resp. $q'(t)\in \Pi_r$) such that $q(\lambda_i)=a_i$ (resp. $q'(\lambda_i)=a_i$). It follows from the interpolation bjiection that $q(t)=q'(t)$ since they interpolates more than $r-1$ data points. Then we have $q(\lambda_0)=q(\lambda_0')=a_0$ where $\lambda_0$ and $\lambda_0'$ are generic in $k$. It cannot hold if $q(t)$ is not a constant polynomial.
We identify the space of one-variable polynomials of degree at most 
$r-1$ with the $r$-dimensional vector space $k^r$. We consider the 
incidence subvariety
    \[
    \Gamma_{\sigma}:=\{(p(t), (y_0,\ldots,y_n))\in k^r\times k^{n+1} ~|~ p(y_i)-y_{\sigma(i)}=0, ~0\leq i\leq n\}.
    \]
Let $\pi_1:\Gamma_{\sigma}\to k^{r}$ and $\pi_2: \Gamma_{\sigma}\to k^{n+1}$
be natural projections. Suppose that \((p(t), (\lambda_0,\ldots, \lambda_n))\)
is a point in \(\Gamma_\sigma\), and \(N\) is the order of $\sigma$, 
then
    \[
    p^{\circ N}(\lambda_i)=\lambda_{\sigma^N(i)}=\lambda_i, 
    ~\forall~ 0\leq i\leq n.
    \]

\textbf{Case 1.} The polynoimal $p^{\circ N}(t)-t$ is non-zero. Then 
$p^{\circ N}(t)-t$ has finite roots. Hence the possiblity of 
\(\{\lambda_0,\ldots, \lambda_n\}\) are finitely many. It follows that 
\(\pi_1\) is a quasi-finite map, and the dimension of $\Gamma_\sigma$ 
is $r$. Then $\pi_2(\Gamma_\sigma)$ is a proper subset in $k^{n+1}$. 
Therefore, for a generic $(\lambda_0,\ldots, \lambda_n)\in k^{n+1}$, 
there exists no $p(t)\in k^r$ satisfying the condition 
$p(\lambda_i)=\lambda_{\sigma(i)}, 0\leq i\leq n$.

\textbf{Case 2.} The polynoimal $p^{\circ N}(t)-t$ is zero. This situation
occurs only if $p(t)$ is a linear form $\alpha+\beta t$ with \(\beta^N=1\). 
All such linear forms consitutes a one-dimensional subset 
\(\Xi\subset k^r\). We claim that $\dim \pi_1^{-1}(\Xi)<n+1$. The fiber 
of \(\pi_1\) over a linear form \(\alpha+\beta t\) is a subset in 
\(k^{n+1}\) defined by \(n+1\) equations
    \[
    \alpha+\beta y_i=y_{\sigma(i)}, 0\leq i\leq n.
    \]
Since \(n > r\geq 2\) and \(\sigma\neq \operatorname{Id}\), these equations
impose at least two constraints on \(k^{n+1}\). Therefore,
the dimension of \(\pi_1^{-1}(\Xi)\) is less than \(n+1\). In conclusion, 
for a generic choice of $(\lambda_0,\ldots, \lambda_n)\in k^{n+1}$, 
no linear form can interpolate the data 
$(\lambda_i, \lambda_{\sigma(i)})_{0\leq i\leq n}$.
\end{proof}

\subsection*{Cohomology of complete intersection of Fermat type}
    \label{subsec:coh-ci-Fermat}
Fix a prime \(\ell\) invertible in the field \(k\). Let \(\mathbb{Q}_{\ell}\)
be the \(\ell\)-adic numbers, and let \(K\) be a field extension of 
\(\mathbb{Q}_{\ell}\) containing \(d\)-th roots of unity. Let 
\(\mathrm{H}^i_{\mathrm{prim}}(X, K)\) denote the primitive part of 
the \'etale cohomology \(\mathrm{H}^i_{\mathrm{et}}(X, K)\), namely, 
the orthogonal complement of the image 
\(\mathrm{H}^i_{\mathrm{et}}(\mathbb{P}^n, K)\to \mathrm{H}^i_{\mathrm{et}}(X, K)\).
The character group of \(G_n^d\) is identified with the group
\[
   \hat{G}^d_n\coloneq\{\chi = (a_0,\ldots,a_n)\in (\mathbb{Z}/d\mathbb{Z})^{\oplus n+1}~|~\sum a_i=0\}.
\]
The  group action of \(G_n^d\) on the middle cohomology 
\(\mathrm{H}^{n-r}_{\mathrm{prim}}(X_{n,r,d}, K)\) induces the
decomposition
\begin{equation}
\label{eqs:eigen-decomp}
    \mathrm{H}^{n-r}_{\mathrm{prim}}(X, K)\cong \bigoplus_{\chi\in \hat{G}^d_n} V_{\chi},
\end{equation}
where \(V_{\chi}\) is the \(\chi\)-eigenspace 
\[
    \{v\in \mathrm{H}^{n-r}_{\mathrm{prim}}(X, K)~|~ g(v)=\chi(g)\cdot v, \forall g\in G^d_n\}.
\] 
For a smooth complete intersection of Fermat type defined over the 
complex numbers and the corresponding decomposition on the singular 
cohomology, the dimension of \(V_{\chi}\) has been characterized 
by Aoki~\cite[Thm. 1.2]{Aoki:ci-Fermat-type}. It is parallel to 
apply his approach to \'etale cohomology
\begin{proposition}
\label{prop:cohom-decompo-ci}
Let \(X\coloneq X_{n,r,d}\) be the complete intersection of Fermat 
type defined in~\eqref{equation:Fermat-eqs}. For a character 
\(\chi=(a_0,\ldots,a_n)\in \hat{G}^d_n\), set \(I(\chi)\) to be the 
subset  \(\{i\in \{0, \ldots, n\}~|~a_i=0\}\) and \(\nu:=\#I(\chi)\). 
With the decomposition~\eqref{eqs:eigen-decomp} we have
\[
    \dim V_{\chi}=\binom{n-\nu-1}{r-\nu-1}, ~\forall~ 0\leq \nu\leq r-1.
\]
\end{proposition}
\begin{proof}
Let \(\pi\colon \mathbb{P}_k^n\to \mathbb{P}_k^n\) be the morphism 
\(\pi((x_0,\ldots, x_n))=(x_0^d,\ldots, x_n^d)\). Let \(L\) be the
subspace of \(\mathbb{P}^n\) defined by the linear equations 
\[
    \sum_{j=0}^{n}\lambda^i_j x_j, 0\leq i\leq r-1.
\]
Then \(X=\pi^{-1}(L)\). Since the morphism \(\pi\) is finite, we have
\(\mathrm{H}^i_{\et}(X, K)\cong \mathrm{H}^i_{\et}(L, \pi_*K)\). Since 
the characteristic of the coefficient field \(K\) is zero, and \(K\) 
contains \(d\)-th roots of unity, we have 
\(\pi_*K=\bigoplus_{\chi} (\pi_*K)_\chi\), and 
\[
    \mathrm{H}^i_{\et}(X, K)_\chi \cong \mathrm{H}^i_{\et}(L, (\pi_*K)_\chi).
\]
By the Lefschetz theorem, for \(i\neq n-r\), we have
\(\mathrm{H}^i_{\et}(X, K) \cong \mathrm{H}^i(\mathbb{P}^n, K)\).
Hence, for \(\chi\neq 0\), it can deduce that
\[
\chi(L, (\pi_*K)_\chi)=(-1)^{n-r}\dim V_\chi.
\]
The remaing proof of showing 
\(\chi(L, (\pi_*K)_\chi)=(-1)^{n-r}\binom{n-\nu-1}{r-\nu-1}\) follows 
the same steps as in~\cite[Thm. 1.2]{Aoki:ci-Fermat-type}.
\end{proof}

\begin{proposition}
    \label{proposition:auto-faithful-fermat-ci}
    Let $X:=X_{n,r,d}\subset \mathbb{P}^n_k$ be the complete intersection
    of Fermat type defined in~\eqref{equation:Fermat-eqs}.
    Fix a prime \(\ell\neq \mathrm{char}(k)\).
    Then the group $\Aut_L(X)$ of linear automorphisms acts faithfully
    on the primitive cohomology $\mathrm{H}^{n-r}_{\mathrm{prim}}(X, \mathbb{Q}_{\ell})$
    unless \(d=r=2\) and \(n+1\) is even.
\end{proposition}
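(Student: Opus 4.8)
The plan is to reduce the statement to a combinatorial assertion about characters of $G^d_n$. For general parameters $\lambda_0,\ldots,\lambda_n$ we have $\Aut_L(X)=G^d_n$ by Proposition~\ref{proposition:Aut-Fermat-ci}, so the task is to decide when $G^d_n$ acts faithfully on $\mathrm{H}^{n-r}_{\mathrm{prim}}(X,K)$; this is what the application of Theorem~\ref{theorem:mere} to a single member needs. The quadric case $d=2$ has already been settled by the eigenvalue computations in the three bullet points above, so I will assume $d\geq 3$. The starting point is Terasoma's decomposition (Theorem~\ref{theorem:cohom-decompo-ci})
\[
\mathrm{H}^{n-r}_{\mathrm{prim}}(X,K)\cong\bigoplus_{\chi\in\hat{G}^d_n}\wedge^{n-r}\mathrm{H}^1(D,K)_\chi .
\]
The first step is to compute how an element $g\in G^d_n$ acts on each summand. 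Using the presentation $X\cong D^{n-r}/N$ with $N=\Ker\Sigma$, the element $g$ acts through any lift $(g_1,\ldots,g_{n-r};\sigma)$ with $\sum_i g_i=g$; on $\wedge^{n-r}\mathrm{H}^1(D)_\chi$ the diagonal part contributes the scalar $\prod_i\chi(g_i)=\chi(\sum_i g_i)=\chi(g)$, while the permutation $\sigma$ acts trivially because the sign of the permutation of tensor factors cancels against the Koszul sign coming from the odd degree of $\mathrm{H}^1$. Hence $g$ acts on $\wedge^{n-r}\mathrm{H}^1(D)_\chi$ by the \emph{scalar} $\chi(g)$, in agreement with the $d=2$ computations above.

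Granting this, $g$ acts trivially on the whole primitive cohomology if and only if $\chi(g)=1$ for every character $\chi$ that \emph{survives}, i.e.\ for which $\wedge^{n-r}\mathrm{H}^1(D)_\chi\neq 0$. Writing $\chi=(a_0,\ldots,a_n)$ with $\sum a_i\equiv 0$, the rank-one local system attached to $\chi$ on $\mathbb{P}^1$ is ramified exactly at those $\lambda_i$ with $a_i\neq 0$ and is unramified at $\infty$; computing the Euler characteristic of a nontrivial rank-one local system on the corresponding punctured line gives
\[
\dim_K\mathrm{H}^1(D)_\chi=\#\{i:a_i\neq 0\}-2 .
\]
Therefore $\wedge^{n-r}\mathrm{H}^1(D)_\chi\neq 0$ if and only if $\#\{i:a_i\neq 0\}\geq n-r+2$, equivalently $\chi$ has at most $r-1$ vanishing coordinates. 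Faithfulness of $G^d_n$ is thus equivalent to the purely combinatorial assertion that the set
\[
S:=\{\chi\in\hat{G}^d_n:\#\{i:a_i=0\}\leq r-1\}
\]
generates the character group $\hat{G}^d_n=\{(a_i)\in(\mathbb{Z}/d)^{n+1}:\sum a_i=0\}$.

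For $d\geq 3$ I would prove this generation statement directly. Since the differences $e_i-e_j$ generate the sum-zero subgroup $\hat{G}^d_n$, it suffices to realize each $e_i-e_j$ as a difference of two elements of $S$. Because $d\geq 3$, one can choose a vector $u$ with all coordinates nonzero, with $\sum_k u_k\equiv 0$, and with $u_i\neq -1$ and $u_j\neq 1$; then both $u$ and $u+(e_i-e_j)$ have no vanishing coordinate, hence lie in $S$, and their difference is $e_i-e_j$. The existence of such $u$ is elementary: after fixing $u_i,u_j$ one adjusts the remaining $n-1\geq 2$ coordinates, each ranging over the $d-1\geq 2$ nonzero residues, to correct the sum modulo $d$, with a short case check for $d=3$. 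This gives $\langle S\rangle=\hat{G}^d_n$ and hence faithfulness for all $d\geq 3$; the excluded case $d=r=2$ with $n+1$ even is precisely the one in which the only surviving character is $(1,\ldots,1)$, whose kernel in $G^2_n$ has index $2$, so faithfulness genuinely fails there.

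The main obstacle is the first step: pinning down the scalar by which $g$ acts on each $\chi$-summand through Terasoma's identification $X\cong D^{n-r}/N$, where one must track carefully the interplay between the $\mathfrak{S}_{n-r}$-permutation of tensor factors and the Koszul sign, and check that the resulting eigenvalue is independent of the chosen lift of $g$. Once the eigenvalue is shown to be $\chi(g)$, the dimension count and the generation lemma are routine, the latter requiring only a uniform treatment of the small values of $d$ and the boundary values of $(n,r)$.
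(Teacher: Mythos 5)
Your treatment of the subgroup \(G^d_n\) is correct, but it does not prove the proposition as stated, and the missing piece is exactly where the paper does extra work. Proposition~\ref{proposition:auto-faithful-fermat-ci} concerns \emph{every} Fermat complete intersection~\eqref{equation:Fermat-eqs} (any pairwise distinct \(\lambda_i\)) and the full group \(\Aut_L(X)\); by Proposition~\ref{proposition:Aut-Fermat-ci} this group contains \(G^d_n\) but can be strictly larger for special \(\lambda_i\) (the quotient \(\Aut_L(X)/G^d_n\) embeds into \(\mathfrak{S}_{n+1}\) and need not vanish). You prove faithfulness only for \(G^d_n\), and then invoke genericity of the \(\lambda_i\) to get \(\Aut_L(X)=G^d_n\); that weaker statement does suffice for the downstream application in Theorem~\ref{theorem:gen-trivial-auto}, but it is not the proposition. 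The paper closes this gap with a further argument: if \(\sigma\in\Aut_L(X)\) acts trivially on \(\mathrm{H}^{n-r}_{\mathrm{prim}}(X,K)\), then since \(\sigma^*\) carries the \(\chi\)-eigenspace to the \(\chi^\sigma\)-eigenspace, where \(\chi^\sigma(g)=\chi(\sigma g\sigma^{-1})\), triviality forces \(\chi(g)=\chi(\sigma g\sigma^{-1})\) for every surviving \(\chi\) and every \(g\in G^d_n\); because the surviving characters have trivial common kernel, \(\sigma\) centralizes \(G^d_n\), hence is diagonal, hence lies in \(G^d_n\) by the kernel description in Proposition~\ref{proposition:Aut-Fermat-ci}, and \(G^d_n\)-faithfulness finishes. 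Note that your generation lemma supplies exactly the input this patch needs (trivial common kernel of the surviving characters), so the repair costs only a paragraph --- but as written your proof is incomplete.

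For the part you do prove, your route is genuinely different from the paper's and in one respect sharper. The paper verifies non-vanishing of the relevant wedge powers only for hand-picked characters \(\chi_k\), \(\chi_{(s,t,i)}\) subject to the arithmetic conditions \((\star)\), via Hurwitz-formula computations for the covers \(C_\chi\), and then applies Lemma~\ref{lem:basis-characters}; you instead compute the exact eigenspace dimension \(\dim_K\mathrm{H}^1(D,K)_\chi=\#\{i : a_i\neq 0\}-2\) (valid in positive characteristic because \(p\nmid d\) makes everything tame), characterize \emph{all} surviving characters as those with at most \(r-1\) zero coordinates, and show this set generates \(\hat{G}^d_n\) for \(d\geq 3\) by the elementary \(e_i-e_j\) argument. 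This is uniform in \(d\), avoids the case analysis around \((\star)\), and is more careful than the paper on one point: the paper identifies \(\mathrm{H}^1(D,K)_\chi\) with \(\mathrm{H}^1(C_\chi,K)\), which for characters of order \(e>2\) is the larger space \(\bigoplus_{\chi^j\neq 1}\mathrm{H}^1(D,K)_{\chi^j}\), whereas your formula computes the eigenspace itself, which is what the decomposition actually requires. Finally, the step you flag as the main obstacle --- that \(g\) acts on \(\wedge^{n-r}\mathrm{H}^1(D,K)_\chi\) by the scalar \(\chi(g)\) --- is correct as you sketch it (lift \(g\) to \((g,0,\ldots,0;\operatorname{Id})\); two lifts differ by an element of \(N\), which acts trivially on \(N\)-invariants), and it is precisely the equivariant content of Theorem~\ref{theorem:cohom-decompo-ci} that the paper uses implicitly when it writes \(v=g^*(v)=\chi(g)v\).
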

\begin{proof}
We first prove the action of the subgroup \(G^d_n\subset \mathrm{Aut}_L(X)\) 
on the cohomology is faithful. Suppose that \(g\in G^d_n\) acts trivially 
on \(\mathrm{H}^{n-r}_{\mathrm{prim}}(X, \mathbb{Q}_{\ell})\). On the 
eigenspace decomposition~\eqref{eqs:eigen-decomp}, the field extension 
\(\mathbb{Q}_{\ell}\subset K\) has no impact on the action of \(g\). 
It follows that \(\chi(g)=1\) if \(V_\chi\neq \emptyset\). To prove
the faithfulness, we need find a set \(S\) of characters of \(G^d_n\) 
such that
\begin{itemize}
    \item \(V_{\chi}\neq 0, ~\forall \chi\in S\);
    \item \(\{\sigma\in G^d_n~|~ \chi(\sigma)=1, \forall \chi\in S\}=\{\operatorname{Id}\}.\)
\end{itemize}

Choose integers $k,s,t\in \mathbb{Z}_{\geq 0}$ satisfying
\[
d \mid k+n,~ (n+t, d)=1 \text{~and~} s+t\equiv n-1\ \textrm{mod}\ d. 
\]
Consider the following characters
    \begin{align*}
    \chi_{k}&:=(1,\ldots, 1, k);\\
    \chi_{(s,t,i)}&:=(1,\ldots, s,\ldots,t), 1\leq i\leq n,
    \end{align*}
where \(\chi_{(s,t,i)}\) means the \(i\)-th term is \(s\), the 
\((n+1)\)-th term is \(t\), and \(1\) otherwise. For the character 
\(\chi_k\) the number \(\nu =\#I(\chi_k)\leq 1\). 
By Proposition~\ref{prop:cohom-decompo-ci} we have
\[
\dim V_{\chi_k}=\binom{n-\nu-1}{r-\nu-1} >0.
\] 
For the eigenspace \(V_{\chi_{(s,t,i)}}\), we look into the following
two situations 
\begin{enumerate}
    \item \(r\geq 3\). Since \(\nu=\#I(\chi_{(s,t,i)})\leq 2\) we have
    \(\dim V_{\chi_{(s,t,i)}}=\binom{n-\nu-1}{r-\nu-1}>0\).
    \item \(d\geq 3\). As shown in Lemma~\ref{lem:basis-characters},
    one can choose integers \(t\) and \(s\) such that at least one of 
    them is non-zero. Hence \(\nu=\#I(\chi_{(s,t,i)})\leq 1\) and 
    \(\dim V_{\chi_{(s,t,i)}}=\binom{n-\nu-1}{r-\nu-1}>0\).
\end{enumerate}
Therefore 
\[
    \chi_k(g)=\chi_{(s,t,i)}(g)=1,~1\leq i\leq n
\]
holds for the cases \(d\geq 3, r\geq 2\) and \(d\geq 2, r\geq 3\). 
It follows from Lemma~\ref{lem:basis-characters} that \(g\) is the 
identity element.

The remaining case \(d=2, r=2\), the complete intersection
of two quadrics, is well-known~\cite{Reid-ci-quadric}.
For the sake of completeness, we use the above argument to demonstrate it. 
\begin{enumerate}
    \item \(n+1\) is odd. Set \(\{\chi_i=(1,\ldots, a_i, \ldots,1)~|~0\leq i\leq n~,a_i=0\}\). 
    Then \(\nu=\#I(\chi_i)=1\) and \(\dim V_{\chi_{i}}=\binom{n-2}{r-2}=1\). 
    Write \(g=(b_0,\ldots,b_n)\in G^2_n\). It is easy to check that 
    \(\chi_i(g)=1\) for all \(0\leq i\leq n\) implies 
    \(b_j\equiv b_j'\ \textrm{mod}\ 2\), so that \(g=\mathrm{Id}\in G^2_n\). 
    Indeed, note that 
    \(\{\chi_i~|~0\leq i\leq n\}=\{\chi_k, \chi_{(s,t,i)}~|~0\leq i\leq n\}\) 
    for \(k=0, s=0, t=1\).
    \item \(n+1\) is even. We see from Proposition~\ref{prop:cohom-decompo-ci}
    that in this case \(V_{\chi}\) is non-zero if and only if 
    \(\chi = (1, \ldots, 1)\). Hence \(\mathrm{H}^{n-2}_{\mathrm{prim}}(X, K)=V_{\chi}\). Let 
    \(e_{ij}\in G^2_n\) be the transformation
    \[
    (x_0:\cdots: x_i:\cdots: x_j: \cdots : x_n)\mapsto (x_0:\cdots: -x_i:\cdots: -x_j: \cdots : x_n).
    \]
    Then \(e_{ij}^*\) acts trivially on \(V_{\chi}\),
    but \(e_{ij}\) is not the identity map.
\end{enumerate}  

In conclusion, the action of $G^d_n$ on $\mathrm{H}^{n-r}_{\mathrm{prim}}(X, K)$,
as well as on \(\mathrm{H}^{n-r}_{\mathrm{prim}}(X, \mathbb{Q}_{\ell})\), 
is faithful except for \(X\) is an odd-dimensional complete 
intersection of two quadrics. Hence we can already prove \(\Aut_L(X)\) acts 
faithfully on the primitive cohomology for a generic complete intersection \(X\) 
of Fermat type since \(\Aut_L(X)=G^d_n\) by Proposition~\ref{proposition:Aut-Fermat-ci}. 

Now consider the case that \(G^d_n < \Aut_L(X)\) is a proper subgroup. 
Given a character \(\chi\in \hat{G}^d_n\) and an 
automorphism $\sigma\in \Aut_L(X)$, we define the character 
$\chi^{\sigma}$ to be
\[
\chi^{\sigma}(g):=\chi(\sigma g\sigma^{-1}).
\]
Note that \(\sigma g\sigma^{-1}\in G^d_n\) since \(G^d_n\) is a normal
subgroup by Proposition~\ref{proposition:Aut-Fermat-ci}. The action 
$\sigma^*$ transfers a $\chi$-eigenspace to a $\chi^{\sigma}$-eigenspace 
if the $\chi$-eigenspace is nontrivial. In fact, for any 
$v\in \mathrm{H}^{n-r}_\mathrm{prim}(X, K)_{\chi}$
    \[
    g^*\sigma^*(v)=\sigma^*(\sigma g\sigma^{-1})^*(v)
    =\sigma^*(\chi(\sigma g \sigma^{-1})v)=\chi^{\sigma}(g)\sigma^*(v).
    \]
Suppose that $\sigma^*$ acts trivially on 
$\mathrm{H}^{n-r}_\mathrm{prim}(X, K)$. Then \(\chi=\chi^{\sigma}\)
for any \(\chi\in S\). Hence
    \[
    \chi(g)=\chi(\sigma g\sigma^{-1}), ~\forall g\in G^d_n.
    \]
From the preceding discussion, it follows that $g=\sigma g\sigma^{-1}$
for all $g\in G^d_n$. Let $g$ be a diagonal matrix with distinct entries. 
The relation $\sigma g=g\sigma$ implies $\sigma$ must be a diagonal matrix, 
thus $\sigma$ is contained in $G^d_n$. Given that the action of $G^d_n$
on the cohomology is faithful, we conclude that $\sigma$ is the identity map.
\end{proof}

\begin{lemma}\label{lem:basis-characters}
Let \(d\) and \(n\) be positive integers. Choose integers
\(k, t, s\in \mathbb{Z}\) satisfying
\begin{equation}
\label{eqs:arithmetic-cond}
d \mid k+n,~ (n+t,d)=1 \textrm{~and~} s+t\equiv k+1\ \textrm{mod}\ d.
\end{equation}
Consider the $(n+1)\times (n+1)$-matrix
\begin{equation}
A:=\begin{pmatrix}
s &\ldots & 1 & t\\
\vdots &  \ddots & \vdots &\vdots \\
1 &\ldots & s & t\\
1 &\ldots & 1 & k
\end{pmatrix}\in M_{n+1}(\mathbb{Z}/d\mathbb{Z}),
\end{equation}
for the tuple $(1,\ldots,s,\ldots,t)$ we place $s$ in the $i$-th term, 
and place $t$ in the $n+1$-term, and $1$ otherwise. Then the set of solutions 
\(\{x\in (\mathbb{Z}/d\mathbb{Z})^{\oplus n+1} ~|~A\cdot x=0\}\)
consists of the diagonal elements $\{(a,\cdots,a) ~|~ a\in \mathbb{Z}/d\mathbb{Z}\}$.
Moreover, when \(d\geq 3\), one can choose integers \(t\)
and \(s\) such that at least one is non-zero.
\end{lemma}
\begin{proof}
Suppose that \(x=(b_1,\ldots, b_{n+1})\) is a solution of 
\(A\cdot x=0\). Then we have
\[
\sum_{j\neq i, 1\leq j\leq n}b_j+sb_i+tb_{n+1}\equiv 0\ \textrm{mod}\ d, 
~\sum_{j=1}^{n}b_j+kb_{n+1}\equiv 0\ \textrm{mod}\ d.
\]
It follows that
    \begin{equation}
        \label{solution-d-mod}
        (s-1)b_i+(t-k)b_{n+1}\equiv 0\ \textrm{mod}\ d, ~\forall 1\leq i\leq n.
    \end{equation}
Given the assumption on the integers $k,t,s$, we have
\[
(s-1)\equiv (k-t)\ \textrm{mod}\ d, ~(t-k)\equiv (n+t)\ \textrm{mod}\ d.
\]
Since $n+t$ is coprime to $d$, it follows that $k-t$ and \(s-1\) are 
invertible in $\mathbb{Z}/d\mathbb{Z}$. Then the equation~\eqref{solution-d-mod}
implies
\[
b_i\equiv b_{n+1}\ \textrm{mod}\ d,~ \forall {1\leq i\leq n}.
\]

Now let \(d\geq 3\). We prove the last assertion by contradiction.
Suppose that only \(t=s=0\) satisfies the arithmetic condition~\eqref{eqs:arithmetic-cond}
From this we obtain the relation
\[
    k\equiv -1\ \textrm{mod}\ d, ~n\equiv 1\ \textrm{mod}\ d.
\] 
Since \(d\geq 3\) there always exists an invertible element 
\(t'\in (\mathbb{Z}/d\mathbb{Z})^*\) such that \(t'\neq 1\). Taking
\(t=t'-1\neq 0\) yields \(n+t\equiv t'\ \textrm{mod}\ d\). Since \(t'\)
is invertible we can assert \((n+t, d)=1\). It leads to a contradiction.
\end{proof}

Now let us prove Theorem~\ref{theorem:gen-trivial-auto}.
\begin{proof}[Proof of Theorem~\ref{theorem:gen-trivial-auto}]
    By Observation~\ref{observation:reduction}, the complete
    intersection \(X\subset \mathbb{P}^n\) of type \((d_1, \ldots, d_c)\),
    under the assumptions in Theorem~\ref{theorem:gen-trivial-auto}, 
    can be regarded as a subscheme of the complete intersection 
    defined by the first \(r\) polynomials with minimal degree 
    \(d:=d_1\) where \(d\geq 3\), \(r\geq 1\), or \(d=2\), \(r\geq 3\). 
    Thus we shall prove that a general codimension \(r\) complete 
    intersection \(Y\subset \mathbb{P}^n\) of hypersurfaces with 
    equal degree \(d\) has no non-trivial linear automorphisms.

    \text{(i)}
    Suppose that \(d\geq 3\), \(r=1\). This reduces to the cases of
    hypersurfaces. A general smooth hypersurface \(Y\subset \mathbb{P}^n\) 
    of \(\deg Y\geq 3\) has no linear automorphisms unless \((d;n) = (3;2)\),
    see~\cite{Matsumura-Monsky:hypersurfaces-auto,Poonen:auto-hypersurface}.

    \text{(ii)}
    Suppose that \(d\geq 3\), \(r\geq 2\) or \(d=2\), \(r\geq 3\).
    Let us verify Hypotheses~\eqref{item:finiteness}--\eqref{item:big-monodromy}
    in Theorem~\ref{theorem:mere}.
    
    \begin{enumerate}
        \item Set \(\pi: \mathcal{X}\to B\) as the universal family
            of smooth complete intersections of type \((d_1,\ldots, d_c;n)\).
            The relative automorphism schemes \(\Aut_B(\mathcal{X})\to B\) 
            is a finite group scheme over \(B\) if the moduli stack 
            of smooth complete intersections is a separated and 
            Delgine-Mumford stack, see~\cite[Lem. 7.7]{Laumon-Moret-Bailly:champs}
            or \cite[Lem. 2.3]{Javanpeykar-Loughran:moduli-ci}. 
            Benoist affirmed that the moduli stack is separated and 
            Delgine-Mumford if \((d_1,\ldots, d_c;n)\neq (2;n)\), 
            see~\cite{Benoist:separated-moduli-ci}. Hence 
            Hypothsis~(\ref{item:finiteness}) holds.

            \item Let \(X_{n,r,d}\subset \mathbb{P}^n\) be the complete intersection 
            of Fermat type defined as~\eqref{equation:Fermat-eqs}
            with \(d\geq 3, r\geq 2\) or \(d=2, r\geq 3\).
            By Proposition~\ref{proposition:auto-faithful-fermat-ci}
            \(\Aut_L(X_{n,r,d})\) acts faithfully on 
            \(\mathrm{H}^{n-r}_{\mathrm{prim}}(X_{n,r,d})\). Thus
            the action is also faithful on \(\mathrm{H}^{n-r}(X_{n,r,d})\).  
            Then Hypothesis~(\ref{item:generic-faithfulness}) is satisfied.

            %\item Proposition~\ref{proposition:big-momondromy-ci} assures
            %the monodromy group for the complete intersections in our cases
            %is as big as possible. The Hypothsis~(\ref{item:big-monodromy})
            %thus follows.

            \item The proof of the bigness of the monodromy group for
            complete intersections is in line with the proof of Theorem
            ~\ref{theorem:general-type-hypersurface}

            Let \(X_b\) be the smooth complete intersection of multidegree
            \((d_1,\ldots, d_c)\) over a general point \(b\in B\). Let 
            \(X_1,\ldots, X_c\) be the hypersurfaces such that 
            \(X_b=X_1 \cap \cdots \cap X_c\). We may assume the complete
            intersection \(Y \coloneq X_2 \cap \cdots \cap X_{c-1}\)
            is smooth. Then \(X_b\) is a hyperplane section in \(Y\). 
            Let \(D\) be a Lefschetz pencil of hyperplane sections in 
            \(Y\) such that \([X_b]\in D\). The set of hyperplane sections 
            that admit ordinary double points is a finite subset \(S\) in 
            \(D\). Let \(U\coloneqq D-S\). The fundamental group \(\pi_1(U, 0)\)
            has a monodromy action on the vanishing cohomology. For 
            complete intersections, we can compare the vanishing cohomology 
            and the primitive cohomology using~\cite[\S 2.3.3]{Voi-Hodge-II}.
            Since the primitive cohomology of a projective space vanishes, 
            we assert inductively that these two cohomologies for a 
            complete intersection coincde. In the rest, we use
            \(\mathrm{H}^{n-c}_{\mathrm{prim}}(X_b; \mathbb{Q}_{\ell})\)
            in place of the vanishing cohomology.

            The monodromy action of \(\pi_1(U, 0)\) factors through the 
            monodromy action of \(\pi_1(B, b)\) via the natural inclusion 
            \((U, 0)\hookrightarrow (B, b)\). Therefore it suffices to prove 
            the monodromy group of \(\pi_1(U, 0)\) is as big as possible.

            Let \(\mathrm{H}_{\ell}\coloneqq \mathrm{\mathrm{H}}^{n-c}_{\mathrm{prim}}(X_b; \mathbb{Q}_{\ell})\),
            \(\psi\) the intersection form on \(\mathrm{H}_{\ell}\).
            If \(n-c\) is odd, the geometric monodromy group 
            \(G_{\mathrm{geom}}\subset \Aut(\mathrm{H}_{\ell}, \psi)\) 
            is the symplectic group \(\mathrm{Sp}(\mathrm{H}_{\ell}, \psi)\)~\cite[Th\'eor\`em 5.10]{Deligne:Weil-I}.
            If \(n-c\) is even, then \(G_{\mathrm{geom}}\) is either 
            the full orthogonal group \(\mathrm{O}(\mathrm{H}_{\ell}, \psi)\) 
            or a finite subgroup of \(\mathrm{O}(\mathrm{H}_{\ell}, \psi)\)~\cite[Th\'eor\`em 4.4.1]{Deligne:Weil-II}.
            
            If \(G_{\mathrm{geom}}\) is finite, the \(p\)-adic Newton polygon for \(X_b\) 
            is a straight line~\cite[Thm. 11.4.9]{Katz-Sarnak:monodromy}.
            Illusie~\cite{Illusie:ord-ci} proved that the Newton polygon
            of a general complete intersection conincides with its Hodge
            polygon. The Hodge polygon is a straight line if and only if 
            the Hodge numbers \(h^{i, n-c-i}\) of \(X_b\) all vanishes 
            except for \(i=\frac{n-c}{2}\). By~\cite[Expos\'e XI]{SGA7-II},
            such siutation arises only when \((d_1,\ldots,d_c; n)\) 
            falls into one of the following 
            \begin{itemize}
                \item \((2; n)\), hyperquadrics;
                \item \((3; 3)\), cubic surfaces;
                \item \((2,2; n)\) and \(n\) is even, even-dimensional
                complete intersections of two quadrics.
            \end{itemize}
            None of the three cases fits the conditions 
            \(d\geq 3\), \(r\geq 2\), or \(d=2\), \(r\geq 3\).
    \end{enumerate}

Therefore, Theorem~\ref{theorem:mere} assures that \(\Aut_L(Y)\)
is isomorphic to \(\{1\}\) or \(\mathbb{Z}/2\mathbb{Z}\). The second possibility will 
be ruled out by Proposition~\ref{proposition:no-invoultions}.
This completes the proof.
\end{proof}

\begin{proposition}
\label{proposition:no-invoultions}
    Let \(k\) be an algebraically closed field of \(\mathrm{char}(k)\neq 2\).
    Let \(X\subset \mathbb{P}^n_k\) be a complete intersection of 
    multidegree \(\underbrace{(d,\ldots, d)}_{r}\) with \(r < n\). 
    Assmuing \(d\geq 3\), or \(d=2, r\geq 3\), then a generic $X$ has 
    no linear automorphisms of order $2$.
\end{proposition}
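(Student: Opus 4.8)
The plan is to bound the dimension of the locus of complete intersections carrying an involution, and to show it is a proper subvariety of the parameter space.

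Since $\operatorname{char}(k)\neq 2$, any order-$2$ element of $\Aut_L(X)\subset \mathrm{PGL}_{n+1}(k)$ lifts to $g\in\mathrm{GL}_{n+1}(k)$ with $g^2$ a scalar; after rescaling $g^2=\mathrm{Id}$, so $g$ is diagonalisable with eigenvalues in $\{+1,-1\}$. Write $a$ and $b=n+1-a$ for the multiplicities of the two eigenvalues, so $a,b\ge 1$ (otherwise $g$ is scalar). As $g$ and $-g$ define the same class in $\mathrm{PGL}_{n+1}$, I may assume $a\ge b\ge 1$; there are finitely many such conjugacy types. Let $S_d$ denote the space of degree-$d$ forms, $D=\dim S_d=\binom{n+d}{d}$, and write $X=V(W)$ with $W=I_X\cap S_d\in\mathrm{Gr}(r,S_d)$. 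Then $g$ preserves $X$ if and only if $gW=W$, equivalently $W=(W\cap S_d^+)\oplus(W\cap S_d^-)$, where $S_d=S_d^+\oplus S_d^-$ is the eigenspace decomposition of $g$ on degree-$d$ forms; set $D^{\pm}=\dim S_d^{\pm}$.

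The family of complete intersections is parametrised by a dense open $U\subset\mathrm{Gr}(r,S_d)$ of dimension $r(D-r)$. For each type $(a,b)$ form the incidence variety $I_{a,b}=\{(g,W):g\in C_{a,b},\ gW=W\}$, where $C_{a,b}$ is the conjugacy class of $g$, of dimension $\dim C_{a,b}=2ab$. The fibre of $I_{a,b}\to C_{a,b}$ over a fixed $g$ is $\bigsqcup_{r_++r_-=r}\mathrm{Gr}(r_+,S_d^+)\times\mathrm{Gr}(r_-,S_d^-)$, of dimension $\max_{r_+}\bigl(r_+(D^+-r_+)+r_-(D^--r_-)\bigr)$. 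The locus of bad $W$ is the image of $I_{a,b}$, so it suffices to prove $\dim I_{a,b}<r(D-r)$ for every type. Using the identity $r(D-r)-\bigl(r_+(D^+-r_+)+r_-(D^--r_-)\bigr)=r_+D^-+r_-D^+-2r_+r_-$, this amounts to the inequality
\[
2ab<\min_{0\le r_+\le r}\bigl(r_+D^-+(r-r_+)D^+-2r_+(r-r_+)\bigr).\tag{$\star$}
\]
Once $(\star)$ holds for each of the finitely many types, the union of the bad loci has dimension $<r(D-r)$, and a general $X\in U$ carries no involution.

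It remains to verify $(\star)$, which is the main point. A generating-function computation gives $D^++D^-=\binom{n+d}{d}$ and $D^+-D^-=[t^d]\,(1-t)^{-a}(1+t)^{-b}\ge 0$ for $a\ge b$, so $D^+\ge D^-$. The right-hand side of $(\star)$ is a convex quadratic in $r_+$, so its minimum over $0\le r_+\le r$ is attained either at $r_+=r$, with value $rD^-$, or at the interior vertex. When $d\ge 3$ and $r\ge 2$ one estimates $D^-\ge b\binom{a+d-2}{d-1}\ge\tfrac32\,ab$ (using $a\ge 2$), and a short analysis of the two cases shows the minimum exceeds $2ab$. When $d=2$ one has $D^-=ab$ and $D^+=\tfrac12\bigl(a(a+1)+b(b+1)\bigr)$; here the endpoint value $rD^-=r\,ab$ already forces $r\ge 3$, and the convexity analysis together with a finite check of the small balanced types $(a,b)$ confirms $(\star)$ precisely when $r\ge 3$. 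The boundary cases in which $(\star)$ degenerates to an equality are exactly the excluded ones, namely the plane cubic $(3;2)$ and the complete intersection of two quadrics $(2,2;n)$; for $r=1$ and $d\ge 3$ the statement is the hypersurface case, where the general member has trivial $\Aut_L$ by Matsumura--Monsky, with the same exception $(3;2)$. The hard part is this inequality bookkeeping for $d=2$, where the crude bounds are too weak and one must track the vertex of the quadratic against $2ab$ through the small configurations.
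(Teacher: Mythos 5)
Your setup coincides with the paper's proof almost line for line: the same incidence variety over the conjugacy class (of dimension $2ab=2n_1n_2$), the same description of the fibre as a disjoint union of products of Grassmannians inside the $\pm$-eigenspaces of $\Sym^d V^*$, and your inequality $(\star)$ is exactly the paper's condition $f_2(e_1-f_1)+f_1(e_2-f_2)-2n_1n_2>0$ under the substitution $(f_1,f_2,e_1,e_2)=(r_+,r_-,D^+,D^-)$. The gap is that the verification of $(\star)$ --- which you yourself call ``the main point'' and ``the hard part'' --- is asserted rather than carried out, and in both places where it is asserted it cannot be waved through. For $d\geq 3$ you only genuinely handle the endpoint $r_+=r$ (the bound $D^-\geq b\binom{a+d-2}{d-1}\geq\frac{3}{2}ab$ is fine and gives $rD^-\geq 3ab$ for $r\geq 2$); but the interior-vertex case is non-vacuous --- for instance when $a=b$ and $d$ is odd one has $D^+=D^-$, so the vertex sits at $r_+=r/2$ inside the interval --- and there one must actually prove $rD^+-\frac{1}{8}(D^+-D^-+2r)^2>2ab$, which you never do. More seriously, for $d=2$ you defer to ``a finite check of the small balanced types $(a,b)$'': this is not a proof, because the types $(a,b)$ with $a+b=n+1$ form an infinite family as $n$ grows, so no finite check can cover them; what is required are estimates uniform in $(a,b)$.

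For comparison, the paper avoids your convexity/vertex dichotomy by splitting on the shape of $(f_1,f_2)$ instead: when $f_1\geq f_2>0$ it bounds the quantity below by $f_2\bigl(\binom{n+d}{d}-2n\bigr)-\frac{(n+1)^2}{2}$, and when $f_2=0$, $f_1=r$ it estimates $re_2-2n_1n_2$ directly; for $d=2$ it rewrites the quantity as $f_2(e_1-2f_1)+(f_1-2)n_1n_2$ and proves the uniform inequalities $e_1>2(n_1+n_2-2)\geq 2f_1$ and $e_1-2>n_1n_2$ (elementary completions of squares in $n_1,n_2$), which settle every $(f_1,f_2)$ once $r\geq 3$. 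Some version of these uniform estimates is precisely what your outline is missing. Two smaller points: your side claim that equality in $(\star)$ occurs \emph{exactly} for $(3;2)$ and $(2,2;n)$ is itself unproved (though it is not needed once $(\star)$ is established in the stated range), and your deferral of $r=1$, $d\geq 3$ to Matsumura--Monsky is sensible --- indeed the dimension count alone cannot work there, since for the plane cubic one gets $rD^-=2ab$ on the nose.
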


\begin{proof}
Let \(V\) be a \(k\)-vector space of dimension \(n+1\) with
\(\mathbb{P}^n_k=\mathbb{V}\). The defining polynomials of the 
codimension \(r\) complete intersection \(X\subset \mathbb{P}^n_k\) 
span an \(r\)-dimensional subspace \(W_X\subset \Sym^d V^*\). 
Let \(\sigma\in \Aut_L(X)\) be a linear automorphism, then \(W_X\) is 
\(\sigma\)-invariant. Our goal is to show that a generic $r$-dimensional 
subspace in \(\Sym^d V^*\) is not stabilized by any involution
\(\sigma\neq \pm \operatorname{Id}\).

Let \(\mathbb{G}(r,\Sym^d V^*)\) be the Grassmannian of \(r\)-dimensional 
subspaces in \(\Sym^d V^*\). For an element 
\(g\in G \coloneq\operatorname{GL}(V)\), we define the subset 
    \[
    G_r(g):=\{ [U]\in \mathbb{G}(r,\Sym^d V^*) ~|~ g\cdot U=U \} 
    \]
of the \(g\)-stabilized subspaces in \(\Sym^d V^*\). If two elements 
\(g, h\in G\) are conjuagte, then \(G_r(g)\) is isomorphic to \(G_r(h)\) 
respectively.

Let $\sigma\in G:=\operatorname{GL}(V)$ be an involution. Denote by
\(\operatorname{Cl}_G(\sigma)\) the conjugacy class of \(\sigma\) 
in \(G\). Define the incidence variety
    \[
    \mathcal{I}_\sigma:=\{(g, [U])\in \operatorname{Cl}_G(\sigma)\times
    \mathbb{G}(r, \Sym^d V^*)~|~ g\cdot U=U\}.
    \]
The projection \(\mathcal{I}_\sigma\to \operatorname{Cl}_G(\sigma)\) 
is a fibration, with each fiber isomorphic to \(G_r(\sigma)\).

Given that \(\mathrm{char}(k)\neq 2\), involutions in \(G\) are similar
to the diagonal matrices of order \(2\). Therefore, to establish our 
assertion, it suffices to prove
\[
\dim \mathbb{G}(r, \Sym^d V^*)>\dim \mathcal{I}_\sigma=\dim \operatorname{Cl}_G(\sigma)+\dim G_{r}(\sigma)
\]
for all diagonal matrices \(\sigma\neq \pm \operatorname{Id}\) of order \(2\).

The action of \(\sigma\) on \(V\) induces a decomposition 
\(V\cong V_+\oplus V_-\) with eigenvalues \(\pm 1\). Then the 
symmetric tensor $\Sym^d V^*$, under the action of $\sigma$,
decomposes into eigenspaces
    \[
    S_+=\bigoplus_{j \text{~even}} S^{d-j} V_+\otimes S^j V_-, ~S_-=
    \bigoplus_{j\text{~odd}} S^{d-j}V_+\otimes S^j V_-.
    \]
Let us set
\begin{align*}
\dim V_+=n_1, \dim V_-=n_2, ~n_1+n_2=n+1, n_1, n_2 > 0,\\
\dim S_+=e_1, \dim S_-=e_2, ~e_1+e_2=\binom{n+d}{d}.
\end{align*}
Suppose that \(W\subset \Sym^d V^*\) is a subspace stabilized
by \(\sigma\). Then \(W\) admits a decomposition \(W_{+}\oplus W_{-}\) 
where \(W_+\subset S_+, W_-\subset S_-\). Hence the set \(G_r(\sigma\)
consists of pairs of subspaces in \(S_+\) and \(S_-\). Let 
\(f_1:=\dim W_{+}\), \(f_2:=\dim W_{-}\). The dimension of \(G_r(\sigma)\) 
is bounded by
\[
\operatorname{max}\{f_1(e_1-f_1)+f_2(e_2-f_2)~|~ 0\leq f_i\leq e_i, f_1+f_2=r\}.
\]

Let \(Z_G(\sigma) < G\) denote the center of \(\sigma\). As a diagonal
matrix \(\sigma\) with \(\mathrm{sign}(\sigma)=(n_1,n_2)\), it is direct
to see \(\dim Z_G(\sigma)=n_1^2+n_2^2\). By the formula
\[
\dim \operatorname{Cl}_G(\sigma)+\dim Z_G(\sigma)=\dim G,
\]
we have \(\dim \operatorname{Cl}_G(\sigma)=2n_1n_2\). Then 
\(\dim \mathbb{G}(r, \Sym^d V^*)-\dim \mathcal{I}_\sigma\) is equal to
\[
\operatorname{min}\{f_2(e_1-f_1)+f_1(e_2-f_2)-2n_1n_2~|~ 0\leq f_i\leq e_i, f_1+f_2=r\}.
\]
Let us show that \(f_2(e_1-f_1)+f_1(e_2-f_2)-2n_1n_2\) are positive 
numbers within the cases \(d\geq 3\) and \(d=2, r\geq 3\).

\begin{itemize}
\item For \(d\geq 3\), we consider the two subcases:

\textbf{(a)} Assuming $f_1\geq f_2>0$, we have the inequality
\[
f_2(e_1-f_1)+f_1(e_2-f_2)-2n_1n_2\geq f_2(e_1+e_2-2f_1)-2n_1n_2.
\]
Observe that $n_1n_2\leq (\frac{n+1}{2})^2$ and $f_1< r<n$. Therefore,
\begin{align*}
f_2(e_1+e_2-2f_1)-2n_1n_2&> f_2(e_1+e_2-2r)-\frac{(n+1)^2}{2}\\
&> f_2(\binom{n+d}{d}-2n)-\frac{(n+1)^2}{2}.
\end{align*}
The term $\binom{n+d}{d}$ increases with $d$. For $d=3$, it is easy 
to verify that
\[
\binom{n+3}{3}-2n-\frac{(n+1)^2}{2}>0, \forall n\geq 3.
\]
%As a result, we have $\dim \mathbb{G}(r, \Sym^d V^*)>\dim \mathcal{I}_\sigma$ in this case.

\textbf{(b)} Assuming $f_2=0,~f_1=r$, we have
\[
f_2(e_1-f_1)+f_1(e_2-f_2)-2n_1n_2=re_2-2n_1n_2.
\]
The dimension \(e_2 = \dim S_-\), which depends on the integers
$n_1, n_2$ and $d$, also increases with \(d\). For $d=3$, we get
\(e_2=\binom{n_2+2}{3}+n_2\cdot \binom{n_1+1}{2}\). This leads to
\begin{align*}
    re_2-2n_1n_2&=r\cdot \binom{n_2+2}{3}+rn_2\cdot \binom{n_1+1}{2}-2n_1n_2\\
    &\geq 2\cdot \binom{n_2+2}{3}+n_2n_1(n_1+1)-2n_1n_2.
\end{align*}
Hence $re_2-2n_1n_2>0$ unless $n_1=1, n_2=0$. However, thess values 
violates the condition \(n_1+n_2=n+1 > 2\). Swapping 
$f_1$ and $f_2$ does not affect our argument. Thus the assertion follows.

\item For \(d=2, r\geq 3\), we get
\[
e_1=\binom{n_1+1}{2}+\binom{n_2+1}{2}, ~e_2=n_1n_2.
\]
Therefore the experssion \(f_2(e_1-f_1)+f_1(e_2-f_2)-2n_1n_2\) equals
    \[
    f_2(e_1-2f_1)+(f_1-2)n_1n_2.
    \]
We claim that \(e_1-2f_1 > 0\). Since \(n_1+n_2-2=n-1 \geq r\geq f_1\), 
it suffices to prove that \(e_1 > 2(n_1+n_2-2)\). We have
\begin{align*}
     &\binom{n_1+1}{2}+\binom{n_2+1}{2}-2(n_1+n_2-2)\\ 
    =& \frac{1}{2}(n_1^2+n_2^2-3(n_1+n_2)+8) \\
    =& \frac{1}{2}((n_1-\frac{3}{2})^2+(n_2-\frac{3}{2})^2+\frac{7}{2}) >0.
\end{align*}
Furthermore, \(e_1 > 2(n_1+n_2-2)\) ensures \(e_1 > 4\) since 
\(n_1+n_2 = n+1 > r+1\geq 4\). When \(f_1 \geq 2\), it follows that 
\(f_2(e_1-2f_1)+(f_1-2)n_1n_2\) is positive.

Now consider the specific cases \((f_1, f_2) = (0, r)\) and \((1, r-1)\), 
then the expression \(f_2(e_1-2f_1)+(f_1-2)n_1n_2\) becomes \(re_1-2n_1n_2\) 
and \((r-1)(e_1-2)-n_1n_2\) repsectively. We claim that \(e_1-2>n_1n_2\). 
In fact, the condition \(n_1+n_2 >4\) implies that
   \[
   \binom{n_1+1}{2}+\binom{n_2+1}{2}-n_1n_2-2=\frac{1}{2}((n_1-n_2)^2+n_1+n_2-4) > 0.
   \]
Using \(r \geq 3\), it is easy to see that both \(re_1-2n_1n_2\) and \((r-1)(e_1-2)-n_1n_2\)
are positive numbers.
\end{itemize}
\end{proof}


\begin{thebibliography}{10}

\bibitem{Aoki:ci-Fermat-type}
Noboru Aoki.
\newblock A note on complete intersections of {F}ermat type.
\newblock {\em Comment. Math. Univ. St. Paul.}, 35(2):231--245, 1986.

\bibitem{Benoist:separated-moduli-ci}
Olivier Benoist.
\newblock S\'{e}paration et propri\'{e}t\'{e} de {D}eligne-{M}umford des champs
  de modules d'intersections compl\`etes lisses.
\newblock {\em J. Lond. Math. Soc. (2)}, 87(1):138--156, 2013.

\bibitem{Chen-Pan-Zhang:auto-coh-ci}
Xi~Chen, Xuanyu Pan, and Dingxin Zhang.
\newblock Automorphism and cohomology ii: Complete intersections.
\newblock {\em International Journal of Mathematics}, 2024.
\newblock \mbox{DOI}:\url{http://doi.org/10.1142/S0129167X24500289}.

\bibitem{Deligne:Weil-I}
Pierre Deligne.
\newblock La conjecture de {W}eil. {I}.
\newblock {\em Inst. Hautes \'{E}tudes Sci. Publ. Math.}, (43):273--307, 1974.

\bibitem{Deligne:Weil-II}
Pierre Deligne.
\newblock La conjecture de {W}eil. {II}.
\newblock {\em Inst. Hautes \'{E}tudes Sci. Publ. Math.}, (52):137--252, 1980.

\bibitem{SGA7-II}
Pierre Deligne and Nicholas Katz.
\newblock {\em Groupes de monodromie en g\'eom\'etrie alg\'ebrique. {II}}.
\newblock Lecture Notes in Mathematics, Vol 340. Springer-Verlag, 1973.
\newblock S{\'e}minaire de G{\'e}om{\'e}trie Alg{\'e}brique du Bois-Marie
  1967--1969 (SGA 7, {II}).

\bibitem{Illusie:ord-ci}
Luc Illusie.
\newblock {\em Ordinarit{\'e} des intersections compl{\`e}tes
  g{\'e}n{\'e}rates}, pages 375--405.
\newblock Birkh{\"a}user Boston, Boston, MA, 2007.

\bibitem{Javanpeykar-Loughran:moduli-ci}
A.~Javanpeykar and D.~Loughran.
\newblock Complete intersections: moduli, {T}orelli, and good reduction.
\newblock {\em Math. Ann.}, 368(3-4):1191--1225, 2017.

\bibitem{Katz-Sarnak:monodromy}
Nicholas~M. Katz and Peter Sarnak.
\newblock {\em Random matrices, {F}robenius eigenvalues, and monodromy},
  volume~45 of {\em American Mathematical Society Colloquium Publications}.
\newblock American Mathematical Society, Providence, RI, 1999.

\bibitem{Laumon-Moret-Bailly:champs}
G\'{e}rard Laumon and Laurent Moret-Bailly.
\newblock {\em Champs alg\'{e}briques}, volume~39 of {\em Ergebnisse der
  Mathematik und ihrer Grenzgebiete. 3. Folge. A Series of Modern Surveys in
  Mathematics [Results in Mathematics and Related Areas. 3rd Series. A Series
  of Modern Surveys in Mathematics]}.
\newblock Springer-Verlag, Berlin, 2000.

\bibitem{Matsumura-Monsky:hypersurfaces-auto}
Hideyuki Matsumura and Paul Monsky.
\newblock On the automorphisms of hypersurfaces.
\newblock {\em J. Math. Kyoto Univ.}, 3:347--361, 1963/1964.

\bibitem{Matsusaka-Mumford:two-fundamental-theorems}
T.~Matsusaka and D.~Mumford.
\newblock Two fundamental theorems on deformations of polarized varieties.
\newblock {\em Amer. J. Math.}, 86:668--684, 1964.

\bibitem{Poonen:auto-hypersurface}
Bjorn Poonen.
\newblock Varieties without extra automorphisms. {III}. {H}ypersurfaces.
\newblock {\em Finite Fields Appl.}, 11(2):230--268, 2005.

\bibitem{Reid-ci-quadric}
M.A. Reid.
\newblock {\em The Complete Intersection of Two Or More Quadrics}.
\newblock University of Cambridge, 1972.

\bibitem{Terasoma:fermat-complete-intersection}
Tomohide Terasoma.
\newblock Complete intersections of hypersurfaces---the {F}ermat case and the
  quadric case.
\newblock {\em Japan. J. Math. (N.S.)}, 14(2):309--384, 1988.

\bibitem{Voi-Hodge-II}
Claire Voisin.
\newblock {\em Hodge Theory and Complex Algebraic Geometry II}, volume~2 of
  {\em Cambridge Studies in Advanced Mathematics}.
\newblock Cambridge University Press, 2003.

\end{thebibliography}
\end{document}